\newtheorem{theorem}{\sc Theorem}[section]
\newtheorem{lemma}[theorem]{\sc Lemma}
\newtheorem{corollary}[theorem]{\sc Corollary}
\title[Coprime commutators]{ On finite groups in which coprime commutators are covered by few cyclic subgroups}
\author{Cristina Acciarri}
\address{Cristina Acciarri:  Department of Mathematics, University of Brasilia,
Brasilia-DF, 70910-900 Brazil}
\email{acciarricristina@yahoo.it}
\author{Pavel Shumyatsky} 
\address{Pavel Shumyatsky: Department of Mathematics, University of Brasilia,
Brasilia-DF, 70910-900 Brazil}
\email{pavel@unb.br}
\keywords{Finite groups, cyclic subgroups, commutators}
\subjclass[2010]{Primary 20F14, Secondary  20D25}
\thanks{This research was supported by CNPq}
\begin{document}
\begin{abstract} 
The coprime commutators $\gamma_j^*$ and $\delta_j^*$ were recently introduced  as a tool to study properties of finite groups that can be expressed in terms of commutators of elements of coprime orders. They are defined as follows.  Let $G$ be a finite group. Every element of $G$ is  both a $\gamma_1^*$-commutator and a $\delta_0^*$-commutator. Now let $j\geq 2$ and let $X$ be the set of all elements of $G$ that are powers of $\gamma_{j-1}^*$-commutators. An element $g$ is a $\gamma_j^*$-commutator if there exist $a\in X$ and $b\in G$ such that $g=[a,b]$ and $(|a|,|b|)=1$. For $j\geq 1$ let $Y$ be the set of all elements of $G$ that are powers of $\delta_{j-1}^*$-commutators. The element $g$ is a $\delta_j^*$-commutator if there exist $a,b\in Y$ such that $g=[a,b]$ and $(|a|,|b|)=1$. The subgroups of $G$ generated by all $\gamma_j^*$-commutators and all  $\delta_j^*$-commutators are denoted by $\gamma_j^*(G)$ and $\delta_j^*(G)$, respectively. For every $j\geq2$ the subgroup $\gamma_j^*(G)$ is precisely the last term of the lower central series of $G$ (which throughout the paper is denoted by $\gamma_\infty(G)$) while for every $j\geq1$ the subgroup $\delta_j^*(G)$ is precisely the last term of the lower central series of $\delta_{j-1}^*(G)$, that is, $\delta_j^*(G)=\gamma_\infty(\delta_{j-1}^*(G))$. 

In the present paper we prove that if $G$ possesses $m$ cyclic subgroups whose union contains all $\gamma_j^*$-commutators of $G$, then $\gamma_j^*(G)$ contains a subgroup $\Delta$, of $m$-bounded order, which is normal in $G$ and has the property that $\gamma_{j}^{*}(G)/\Delta$ is cyclic. If $j\geq2$ and $G$ possesses $m$ cyclic subgroups whose union contains all $\delta_j^*$-commutators of $G$, then the order of $\delta_j^*(G)$ is $m$-bounded.

\end{abstract}

\maketitle

\section{Introduction}
A covering of a group $G$ is a family $\{S_i\}_{i\in I}$ of subsets of
$G$ such that $G=\bigcup_{i\in I}\,S_i$.
If $\{H_i\}_{i\in I}$ is a covering of $G$ by subgroups, it is natural to ask
what information about $G$ can be deduced from properties of the subgroups $H_i$. In the case where the covering is finite actually quite a lot about the structure of $G$ can be said. In particular, as was first pointed out by Baer (see \cite[p.\ 105]{Robinson}), a group covered by finitely many cyclic subgroups is either cyclic or finite. More recently Fern\'andez-Alcober and Shumyatsky proved that if $G$ is a group in which the set of all commutators is covered by finitely many cyclic subgroups, then $G'$ is either finite or cyclic \cite{FS}. This suggests the question about the structure of a group in which the set of all $\gamma_j$-commutators (or of all $\delta_j$-commutators) is covered by finitely many cyclic subgroups. Here the words $\gamma_j$ and $\delta_j$ are defined by the positions $\gamma_1=\delta_0=x_1$, $\gamma_{j+1}=[\gamma_j,x_{j+1}]$ and $\delta_{j+1}=[\delta_j,\delta_j]$.

In \cite{CN} Cutolo and Nicotera showed that if $G$ is a group in which the set of all $\gamma_{j}$-commutators is covered by finitely many cyclic subgroups, then $\gamma_{j}(G)$ is finite-by-cyclic. They also showed that $\gamma_{j}(G)$ can be neither cyclic nor finite. It is still unknown whether  a similar result holds for the derived words $\delta_{j}$.

In \cite{forum} the coprime commutators $\gamma_j^*$ and $\delta_j^*$ were introduced  as a tool to study properties of finite groups that can be expressed in terms of commutators of elements of coprime orders.  For the  reader's convenience we recall here the definitions. Let $G$ be a finite group. Every element of $G$ is  both a $\gamma_1^*$-commutator and a $\delta_0^*$-commutator. Now let $j\geq 2$ and let $X$ be the set of all elements of $G$ that are powers of $\gamma_{j-1}^*$-commutators. An element $g$ is a $\gamma_j^*$-commutator if there exist $a\in X$ and $b\in G$ such that $g=[a,b]$ and $(|a|,|b|)=1$. For $j\geq 1$ let $Y$ be the set of all elements of $G$ that are powers of $\delta_{j-1}^*$-commutators. The element $g$ is a $\delta_j^*$-commutator if there exist $a,b\in Y$ such that $g=[a,b]$ and $(|a|,|b|)=1$. The subgroups of $G$ generated by all $\gamma_j^*$-commutators and all  $\delta_j^*$-commutators will be denoted by $\gamma_j^*(G)$ and $\delta_j^*(G)$, respectively. One can easily see that if $N$ is a normal subgroup of $G$ and $x$ an element whose image in $G/N$ is a $\gamma_j^*$-commutator (respectively a $\delta_j^*$-commutator), then there exists a $\gamma_j^*$-commutator $y$ in $G$ (respectively a $\delta_j^*$-commutator) such that $x\in yN$. 

It was shown in \cite{forum} that $\gamma_j^*(G)=1$ if and only if $G$ is nilpotent and $\delta_j^*(G)=1$ if and only if the Fitting height of $G$ is at most $j$. It follows that for every $j\geq2$ the subgroup $\gamma_j^*(G)$ is precisely the last term of the lower central series of $G$ (which throughout the paper will be denoted by $\gamma_\infty(G)$) while for every $j\geq1$ the subgroup $\delta_j^*(G)$ is precisely the last term of the lower central series of $\delta_{j-1}^*(G)$, that is, $\delta_j^*(G)=\gamma_\infty(\delta_{j-1}^*(G))$ . 

In the present paper we prove the following theorem.
\begin{theorem}\label{gammathm} Let  $j$ be a positive integer and $G$  a finite group that possesses $m$ cyclic subgroups whose union contains all $\gamma_j^*$-commutators of $G$. Then $\gamma_j^*(G)$ contains a subgroup $\Delta$, of $m$-bounded order, which is normal in $G$ and has the property that $\gamma_{j}^{*}(G)/\Delta$ is cyclic.
\end{theorem}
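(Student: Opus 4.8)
Write $R=\gamma_j^*(G)$ and let $K$ be the set of all $\gamma_j^*$-commutators of $G$. Two features of $K$ drive the whole argument: $K$ is invariant under every automorphism of $G$, in particular under conjugation, and $\langle K\rangle=R$. Since $K\subseteq R$, replacing each covering subgroup $C_i$ by $C_i\cap R$ I may assume I am given a conjugation-invariant generating set $K\subseteq\bigcup_{i=1}^m C_i$ of the finite group $R$, with each $C_i$ a cyclic subgroup of $R$. The target is a subgroup $\Delta\le R$ of $m$-bounded order, normal in $G$, with $R/\Delta$ cyclic. The case $j=1$ is already instructive: there every element of $G$ is a $\gamma_1^*$-commutator, so $G=R$ is \emph{itself} covered by the $C_i$, and the statement becomes a purely combinatorial one about a finite group covered by $m$ cyclic subgroups.

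The first step is to isolate and prove that combinatorial lemma: if a finite group $H$ equals $\bigcup_{i=1}^m C_i$ with the $C_i$ cyclic, then $H$ has a normal subgroup of $m$-bounded order with cyclic quotient. Existence of a cyclic subgroup $C$ of index at most $m$ is immediate, since the largest covering subgroup $C_{i_0}$ satisfies $|H|\le m|C_{i_0}|$. To upgrade ``cyclic-by-bounded'' to the required ``bounded-by-cyclic'' I would pass to the core $C_0$ of $C$, which is normal, cyclic, and of $m$-bounded index; observe that $C_H(C_0)$ then has $m$-bounded index and centralizes the cyclic normal subgroup $C_0$; and apply Schur's theorem to this center-by-(bounded) group to see that its derived subgroup has $m$-bounded order. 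Working in the resulting abelian quotient, where a cyclic subgroup of bounded index always admits a complementary piece of bounded order, one extracts the desired $\Delta$. This settles $j=1$ completely.

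The heart of the matter, and the main obstacle, is that for $j\ge2$ the covering controls only the \emph{generating set} $K$ and not the whole of $R=\gamma_\infty(G)$, so no counting bounds any index: $K$ can be a small conjugation-closed generating set while $R$ is large. Here I would exploit the two extra features unavailable to the bare lemma, namely the conjugation-invariance of $K$ and the coprime structure of $\gamma_j^*$-commutators. Discarding those $C_i$ meeting $K$ in a small set and applying B.\,H.\ Neumann's lemma on coverings by cosets, I would force one surviving subgroup, say $C_1$, whose $G$-conjugates already cover all but boundedly much of $K$. The coprimality is precisely what makes the normal closure of $C_1$ tractable: for $b$ of order coprime to $a$, where $a$ is a power of a $\gamma_{j-1}^*$-commutator, the standard coprime-action identities let me rewrite and regenerate the commutators $[a,b]$, converting control of the set $K$ into control of $R$ itself and thereby reducing, modulo a bounded-order piece, to the situation handled by the combinatorial lemma.

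Finally, to make $\Delta$ normal in $G$ rather than merely in $R$, I would keep all choices as canonical as possible (cores, centralizers, and the torsion part of an abelian quotient) and, where a noncanonical choice remains, replace $\Delta$ by the join of its $G$-conjugates, using the conjugation-invariance of $K$ to keep both their number and their orders $m$-bounded while checking that $R$ modulo the enlarged subgroup stays cyclic. I expect the genuinely delicate point throughout to be the uniformity of the estimates: each passage to a centralizer, a core, or a normal closure must be shown to cost only a factor bounded by a function of $m$, independent of $|G|$ and, crucially, independent of the primes dividing $|R|$. Preventing these bounds from degenerating as those primes grow is exactly where the coprime definition of the commutators is indispensable.
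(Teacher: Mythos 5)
Your argument for the base case $j=1$ has a genuine gap, and it is the decisive one. After observing that some covering subgroup $C$ has index at most $m$ in $H$, every subsequent step (core, centralizer, Schur's theorem, splitting the abelian quotient over a cyclic subgroup of bounded index) uses only that index bound, not the covering. But ``cyclic-by-bounded'' does not imply ``bounded-by-cyclic'': the dihedral group $H$ of order $2p$ has a cyclic subgroup of index $2$, yet any $\Delta\trianglelefteq H$ with $H/\Delta$ cyclic must contain $H'$, which has order $p$, unbounded. Running your construction on this $H$ (core $C_0$ = the rotation subgroup, $C_H(C_0)=C_0$, trivial derived subgroup, no ``complementary piece'' needed) produces $\Delta=1$, and $H$ is not cyclic: the obstruction lives not inside the abelian normal subgroup but in the action of $H$ on it, and your closing phrase ``one extracts the desired $\Delta$'' is precisely where the argument fails. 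The theorem for $j=1$ is true only because a covering by $m$ cyclic subgroups is a far stronger hypothesis than the existence of a cyclic subgroup of index $m$ (covering the dihedral group requires $p+1$ cyclic subgroups), so the covering must be exploited throughout. The paper does this in two ways absent from your proposal: Lemma \ref{pri}, a counting argument showing that a \emph{noncyclic} $p$-group covered by $m$ cyclic subgroups has $m$-bounded order, and the observation that a cyclic subgroup generated by a $\gamma_j^*$-commutator has at most $m$ conjugates, because its conjugates are again generated by $\gamma_j^*$-commutators and a cyclic group contains at most one subgroup of each order.

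The part for $j\geq 2$ is too vague to repair the above. Neumann's lemma concerns coverings of a \emph{group} by cosets of subgroups, whereas your $K$ is only a conjugation-closed subset, so the step ``force one surviving subgroup whose $G$-conjugates cover all but boundedly much of $K$'' has no justification as stated; and ``standard coprime-action identities\dots converting control of $K$ into control of $R$'' names no identity and no mechanism. For comparison, the paper's route is: from the conjugate count above, the intersection $N$ of the normalizers of all cyclic subgroups generated by $\gamma_j^*$-commutators meets $\gamma_j^*(G)$ in a subgroup $K$ of $m$-bounded index (using that $\gamma_j^*(G)$ is $m$-generated together with Hall's bound on the number of subgroups of index at most $m$); this $K$ normalizes each $C_i$, so $K'$ centralizes $\gamma_j^*(G)$, and a coprime-action argument places $K$ inside $Z_{\infty}(\gamma_j^*(G))$; Baer's theorem then bounds $|\gamma_\infty(\gamma_j^*(G))|$, which reduces everything to the case where $\gamma_j^*(G)$ is a nilpotent $p$-group, handled by Lemmas \ref{mumugamma}--\ref{blibligamma}, which in turn rest on Lemma \ref{pri} and Lemma \ref{bibi}. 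None of these ingredients, nor workable substitutes for them, appears in your proposal.
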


We note that the above result seems to be new even in the case where $j=1$. Thus, one immediate corollary of Theorem \ref{gammathm} is that a finite group covered by $m$ cyclic subgroups has a normal subgroup $\Delta$ of $m$-bounded order with the property that $G/\Delta$ is cyclic. This can be easily extended to arbitrary groups.

\begin{corollary} Let $G$ be a (possibly infinite) group covered by $m$ cyclic subgroups. Then $G$ has a finite normal subgroup $\Delta$, of $m$-bounded order, such that $G/\Delta$ is cyclic.
\end{corollary}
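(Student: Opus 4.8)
The plan is to lean entirely on Baer's dichotomy together with the finite case of Theorem \ref{gammathm}. Since $G$ is covered by $m$ cyclic subgroups, Baer's theorem recalled in the introduction (see \cite[p.\ 105]{Robinson}) tells us that $G$ is either cyclic or finite, so I would split the argument into these two cases and produce the subgroup $\Delta$ separately in each.

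In the case where $G$ is cyclic I would simply take $\Delta=1$. This subgroup is finite, normal, and of order $1$, hence trivially of $m$-bounded order, while $G/\Delta\cong G$ is cyclic by hypothesis; so all the requirements are met with no further work. In the case where $G$ is finite I would invoke Theorem \ref{gammathm} with $j=1$. Here every element of $G$ is a $\gamma_1^*$-commutator and $\gamma_1^*(G)=G$, so the covering of $G$ by $m$ cyclic subgroups is in particular a covering of the set of all $\gamma_1^*$-commutators of $G$. The theorem then supplies a subgroup $\Delta\le G$ of $m$-bounded order which is normal in $G$ and satisfies $G/\Delta$ cyclic, and $\Delta$ is automatically finite as a subgroup of the finite group $G$.

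I do not anticipate a genuine obstacle, since all the substance has already been absorbed into Baer's theorem and into the finite case of Theorem \ref{gammathm}; the only thing to verify is that the two cases jointly deliver a single $\Delta$ satisfying every clause of the statement. The one point I would be careful about is the uniformity of the order bound in $m$ across the two cases: this is immediate because $|\Delta|=1$ in the cyclic case while the bound is supplied directly by the theorem in the finite case, so the two estimates combine into a single $m$-bounded bound on $|\Delta|$.
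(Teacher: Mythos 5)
Your proof is correct, but it follows a genuinely different route from the paper's. You rely on Baer's dichotomy (a group covered by finitely many cyclic subgroups is either cyclic or finite, recalled in the introduction from \cite[p.\ 105]{Robinson}) and then handle the two cases separately: $\Delta=1$ when $G$ is cyclic, and Theorem \ref{gammathm} with $j=1$ when $G$ is finite, noting that every element of a finite group is a $\gamma_1^*$-commutator, so the covering hypothesis is exactly the hypothesis of the theorem. The paper instead invokes B.\,H. Neumann's theorem \cite{neumann} to produce a cyclic subgroup of finite index, deduces that $G$ is residually finite, observes that every finite quotient inherits a covering by $m$ cyclic subgroups and hence satisfies the hypothesis of Theorem \ref{gammathm}, and then extracts $\Delta$ for $G$ itself; that last step tacitly requires passing from a uniform bound valid in all finite quotients to a single normal subgroup of $G$, an inverse-limit style argument the paper leaves implicit. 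Your argument buys simplicity and completeness: the infinite case becomes trivial, no limit argument is needed, and the uniformity of the bound across the two cases is immediate, as you note. The cost is that you lean on the sharper classical input (the full cyclic-or-finite dichotomy of Baer) rather than only Neumann's covering lemma; since the paper explicitly records Baer's dichotomy in its introduction, this is a legitimate ingredient, and your proof is complete as written.
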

Indeed, let $G$ be as in the above corollary. The classical result of B.\,H. Neumann \cite{neumann} tells us that $G$ has a cyclic subgroup of finite index. Therefore $G$ is residually finite and all finite quotients of $G$ satisfy the hypothesis of Theorem \ref{gammathm}. Hence, $G$ has a normal subgroup $\Delta$ of $m$-bounded order with the property that $G/\Delta$ is cyclic. 

We also mention that in Theorem \ref{gammathm} the subgroup $\gamma_j^*(G)$ is (of bounded order)-by-cyclic and so we observe here a phenomenon related to what was proved by  Cutolo and Nicotera for the verbal subgroups $\gamma_{j}(G)$.  
 
Having dealt with Theorem \ref{gammathm}, it is natural to look at finite groups in which $\delta_{j}^{*}$-commutators can be covered by few cyclic subgroups. Since for $j\leq1$ any $\delta_{j}^{*}$-commutator is a $\gamma_{j+1}^{*}$-commutator, the interesting cases occur when $j\geq2$.  

\begin{theorem}\label{deltathm} 
 Let  $j\geq2$ and $G$ be a finite group that possesses $m$ cyclic subgroups whose union contains all $\delta_j^*$-commutators of $G$.  Then the order of $\delta_j^*(G)$ is $m$-bounded.
\end{theorem}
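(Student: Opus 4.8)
The plan is to prove Theorem~\ref{deltathm} by leveraging Theorem~\ref{gammathm} together with the structural description $\delta_j^*(G)=\gamma_\infty(\delta_{j-1}^*(G))$. Set $K=\delta_{j-1}^*(G)$, so that $\delta_j^*(G)=\gamma_\infty(K)$. The key observation is that every $\delta_j^*$-commutator of $G$ is in particular a $\gamma_2^*$-commutator of the subgroup $K$: if $g=[a,b]$ with $a,b$ powers of $\delta_{j-1}^*$-commutators and $(|a|,|b|)=1$, then $a,b\in K$ and $g$ is a coprime commutator of $\gamma_1^*$-commutators of $K$. Thus the hypothesis that the $\delta_j^*$-commutators of $G$ are covered by $m$ cyclic subgroups gives, after intersecting with $K$, a covering of (a large supply of) $\gamma_2^*$-commutators of $K$ by boundedly many cyclic subgroups, and Theorem~\ref{gammathm} applied inside $K$ will produce a normal subgroup $\Delta\leq\gamma_\infty(K)=\delta_j^*(G)$ of $m$-bounded order with $\delta_j^*(G)/\Delta$ cyclic. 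This already yields that $\delta_j^*(G)$ is (of bounded order)-by-cyclic; the remaining task is to upgrade ``cyclic'' to ``bounded'', i.e.\ to show the cyclic quotient is in fact finite of $m$-bounded order.

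To close that gap I would argue as follows. Passing to $G/\Delta$, I may assume $\delta_j^*(G)$ is cyclic and that $\delta_j^*(G)=\gamma_\infty(K)$ for $K=\delta_{j-1}^*(G)$. First I would record that $\delta_j^*(G)$ is normal in $G$, and that $G$ acts on the cyclic group $\delta_j^*(G)$; since $\operatorname{Aut}$ of a cyclic group is abelian (and, for the relevant finite pieces, of controllable structure), the action is quite restricted. The crucial point is that by definition $\delta_j^*$-commutators are genuine \emph{coprime} commutators $[a,b]$ with $(|a|,|b|)=1$, and coprime action is far more rigid than ordinary commutation. The aim is to show that if the cyclic group $C=\delta_j^*(G)$ were infinite (or had unbounded order), one could produce arbitrarily many distinct coprime commutators lying in $C$ that cannot all be swallowed by $m$ cyclic subgroups, unless $C$ itself is already small. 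Concretely, I expect to use that $C=\gamma_\infty(K)$ is generated by coprime commutators, combined with a counting or pigeonhole argument: each of the $m$ cyclic subgroups meets the cyclic group $C$ in a cyclic subgroup, and a cyclic group cannot contain ``too many'' independent coprime-commutator values coming from a nontrivial coprime action without forcing the automorphisms induced to be trivial.

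The main obstacle, and the step I would spend the most care on, is precisely this last upgrade from cyclic to bounded. The difficulty is that a priori the cyclic quotient $\gamma_\infty(K)$ could be an infinite cyclic group or a large cyclic $p$-group, and I must exploit the coprimality condition $(|a|,|b|)=1$ in the definition of $\delta_j^*$-commutators to rule this out; unlike the $\gamma_j^*$ case (where infinite cyclic sections genuinely persist, as reflected in the statement of Theorem~\ref{gammathm}), here the symmetric coprime structure of $\delta_j^*$ must collapse the quotient entirely. I anticipate reducing to the situation where $K/\gamma_\infty(K)$ is nilpotent and $\gamma_\infty(K)$ is abelian (indeed cyclic), so that $K$ is metanilpotent, and then using coprime-action theory—specifically the fact that in a coprime action a nontrivial element of $\gamma_\infty$ forces a commutator $[a,b]$ of prescribed coprime orders to be nontrivial and to generate a definite subgroup of $C$—to bound $|C|$ in terms of $m$. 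A delicate subsidiary point is ensuring that enough $\delta_j^*$-commutators of $G$ actually lie in, and generate, $C$ so that the covering hypothesis bites; this should follow from the remark in the excerpt that $\delta_j^*$-commutators lift through normal subgroups, allowing the reduction modulo $\Delta$ to be carried out without losing the covering.
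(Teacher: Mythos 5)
Your reduction to Theorem \ref{gammathm} breaks down at the covering hypothesis, and this is the first of two genuine gaps. Write $K=\delta_{j-1}^*(G)$. It is true that every $\delta_j^*$-commutator of $G$ is a $\gamma_2^*$-commutator of $K$, but Theorem \ref{gammathm} applied to $K$ requires that \emph{all} $\gamma_2^*$-commutators of $K$ --- that is, all commutators $[a,b]$ with $a,b\in K$ and $(|a|,|b|)=1$ --- be covered by $m$ cyclic subgroups. The hypothesis of Theorem \ref{deltathm} covers only those $[a,b]$ in which $a$ and $b$ are powers of $\delta_{j-1}^*$-commutators of $G$, and this is in general a proper, much thinner subset: $K$ is merely \emph{generated} by powers of $\delta_{j-1}^*$-commutators, its elements need not themselves be such powers. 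Your parenthetical ``(a large supply of)'' is exactly where the argument fails, and the failure is not a technicality: the thinness of this set is the reason the paper cannot quote the $\gamma^*$ case and instead develops $\delta^*$-specific machinery (Lemma \ref{foca} on generation of Sylow subgroups by powers of $\delta_j^*$-commutators, Lemmas \ref{1113} and \ref{copgen}(3),(4), and the Turull tower argument in Lemma \ref{mumu}) in order to run a parallel argument using only the smaller set of commutators.

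The second gap is the step you yourself flag as the main obstacle, the upgrade from ``cyclic'' to ``bounded'': you have no proof of it, and the mechanism you propose cannot work as stated, because you locate it inside $K$. Inside $K$ alone the desired collapse is false: for $K=S_3$ one has $\gamma_\infty(K)$ cyclic of order $3$ and nontrivial, so no counting of coprime commutators of $K$ in a cyclic group can force triviality --- this is precisely why the paper's Lemma \ref{coradical} is stated only for $j\geq 2$ and is formulated relative to the ambient group $G$. The correct mechanism is algebraic, not counting-based: if $N$ is normal in $G$, $N\leq\delta_j^*(G)$, and $\delta_j^*(G)/N$ is cyclic, then modulo $N$ the automorphism group of the cyclic group $\delta_j^*(G)$ is abelian (Lemma \ref{autocyclic}(1)), so $G'$ centralizes it; since $j\geq 2$ gives $\delta_{j-1}^*(G)\leq G'$, the subgroup $\delta_j^*(G)=\gamma_\infty(\delta_{j-1}^*(G))$ is central in $\delta_{j-1}^*(G)$, forcing $\delta_{j-1}^*(G)$ to be nilpotent and hence $\delta_j^*(G)=N$. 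Note, moreover, that this lemma needs $N$ normal in $G$, whereas Theorem \ref{gammathm} applied to $K$ would only hand you a subgroup normal in $K$. The paper's actual proof does not factor through Theorem \ref{gammathm} at all: it bounds the number of conjugates of each cyclic subgroup generated by a $\delta_j^*$-commutator, obtains a subgroup of $m$-bounded index lying in the hypercenter of $\delta_j^*(G)$, applies Baer's theorem to bound $|\gamma_\infty(\delta_j^*(G))|$, and then treats the remaining nilpotent case via Lemmas \ref{blibli}--\ref{mumu}, with Lemma \ref{coradical} providing the final collapse.
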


Throughout the paper we use the expression ``$(a,b,\dots)$-bounded" to mean that the bound is a function of the parameters $a,b,\dots$. Henceforth all groups considered in this paper will be finite and the term ``group" will mean ``finite group".

\section{Preliminaries} 
We begin with some results about coprime actions of groups. Let $H$ and $K$ be subgroups of a group $G$. We denote by $[K,H]$ the subgroup of $G$ generated by $\{[k,h]:k\in K, h\in H\}$, and by $[K,_iH]$ the subgroup $[[K,_{i-1}H],H]$ for $i\ge2$.  If $G$  is a $p$-group, we denote by $\Omega_{1}(G)$ the subgroup of $G$ generated by its elements of order $p$.

\begin{lemma}[\cite{Gorenstein} Theorems 5.2.3, 5.2.4 and 5.3.6]
\label{Lemmapre}
Let $A$ and $G$ be groups with  $(|G|,|A|)=1$ and suppose that $A$ acts on $G$. Then we have
\begin{itemize}
\item[(1)] $[G,A,A]=[G,A]$;
\item[(2)] If $G$ is  an abelian $p$-group, then $G=C_{G}(A)\times [G,A]$;
\item[(3)] If $G$ is an abelian $p$-group and $A$ acts trivially on $\Omega_{1}(G)$, then $A$ acts trivially on $G$.
\end{itemize}
\end{lemma}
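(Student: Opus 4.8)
The plan is to treat this as the classical package of coprime-action facts, proving the two abelian assertions (2) and (3) by a direct averaging argument and deducing (1) from the product decomposition $G=C_G(A)[G,A]$. Throughout, coprimality $(|G|,|A|)=1$ is used in the form that multiplication by $|A|$ is an automorphism of any abelian section of $G$; recall also that since $|G|$ and $|A|$ are coprime, at least one of $G$, $A$ has odd order and is therefore solvable, so the standard coprime machinery applies.

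For (2), I would write the abelian $p$-group $G$ additively and regard it as a module over the group ring of $A$. Because $(|G|,|A|)=1$, multiplication by $|A|$ is invertible on $G$, so the averaging operator $\pi=|A|^{-1}\sum_{a\in A}a$ is a well-defined endomorphism of $G$. A short computation using $\bigl(\sum_{a}a\bigr)\bigl(\sum_{b}b\bigr)=|A|\sum_{c}c$ shows $\pi^2=\pi$, so $G=\pi G\oplus(1-\pi)G$. One checks that $\pi G=C_G(A)$ (every $\pi x$ is $A$-fixed, and $\pi$ fixes $C_G(A)$ pointwise) and that $(1-\pi)G=[G,A]$ (each $(1-a)x=-[x,a]$ lies in $[G,A]$, and conversely $\pi$ annihilates every $[x,a]$). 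This yields $G=C_G(A)\times[G,A]$.

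For (1), I would use the decomposition $G=C_G(A)[G,A]$ valid for coprime action (this is the content of the cited Theorem 5.3.6; alternatively it follows from (2) on passing to suitable abelian sections, or one simply invokes it). Writing a typical generator of $[G,A]$ as $[g,a]$ with $g=xy$, $x\in C_G(A)$, $y\in[G,A]$, the collection formula $[xy,a]=[x,a]^{y}[y,a]$ together with $[x,a]=1$ gives $[g,a]=[y,a]\in[[G,A],A]=[G,A,A]$; hence $[G,A]\subseteq[G,A,A]$, and the reverse inclusion is trivial. For (3), I would apply (2) to get $G=C_G(A)\times[G,A]$, so that $C_{[G,A]}(A)=[G,A]\cap C_G(A)=1$ and $\Omega_1(G)=\Omega_1(C_G(A))\times\Omega_1([G,A])$. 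If $[G,A]\neq1$ then, being a nontrivial $p$-group, it satisfies $\Omega_1([G,A])\neq1$; but this $A$-invariant subgroup lies in $\Omega_1(G)$, on which $A$ is assumed to act trivially, forcing $\Omega_1([G,A])\subseteq C_{[G,A]}(A)=1$, a contradiction. Thus $[G,A]=1$ and $A$ acts trivially on $G$.

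The main obstacle lies only in (1): justifying $G=C_G(A)[G,A]$ in the non-abelian generality required, which is where the solvability furnished by coprimality (via Feit--Thompson) and the full coprime-action theory enter. The abelian statements (2) and (3) are then elementary, the one subtlety being to remember that the averaging operator $\pi$ genuinely requires $G$ abelian, so (3) must be reduced to the decomposition in (2) rather than argued on $\Omega_1(G)$ directly.
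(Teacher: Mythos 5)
Your proposal is correct, but note that the paper offers no proof of this lemma at all: it is quoted directly from Gorenstein's book (Theorems 5.2.3, 5.2.4 and 5.3.6), so the comparison is really with the classical textbook arguments. Your treatment of (2) via the idempotent averaging operator $\pi=|A|^{-1}\sum_{a\in A}a$ is exactly the standard module-theoretic proof, and the verifications you indicate ($\pi^{2}=\pi$, $\pi G=C_{G}(A)$, $(1-\pi)G=\ker\pi=[G,A]$) all go through; your deduction of (3) from the splitting in (2) is complete and even slightly more than needed, since $\Omega_{1}([G,A])\leq \Omega_{1}(G)\leq C_{G}(A)$ together with $C_{G}(A)\cap[G,A]=1$ already suffices, without the full decomposition of $\Omega_{1}(G)$. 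For (1) you correctly isolate the one genuinely nontrivial ingredient, the coprime decomposition $G=C_{G}(A)[G,A]$, and your derivation of $[G,A]\subseteq[G,A,A]$ from it via $[xy,a]=[x,a]^{y}[y,a]$ is the standard reduction; since you invoke that decomposition rather than prove it, your argument for (1) is a reduction, not a self-contained proof, but this matches the paper's own practice of citing these facts. Two small caveats: the parenthetical claim that $G=C_{G}(A)[G,A]$ ``follows from (2) on passing to suitable abelian sections'' is too quick as stated --- what an induction through abelian sections actually requires is the lifting of $A$-fixed points through an abelian $A$-invariant normal section, i.e.\ a vanishing-of-first-cohomology statement proved by averaging $1$-cocycles, which is in the same spirit as (2) but not an application of it (and in full generality the theory rests on Glauberman's lemma plus Feit--Thompson, as you note). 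It is also worth observing that everywhere this lemma is used in the paper, $G$ is a $p$-group acted on by a $p'$-group of automorphisms, so solvability is automatic and the decomposition $G=C_{G}(A)[G,A]$ has an elementary inductive proof; none of the deep input your sketch gestures at is actually consumed by the applications.
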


\begin{lemma}\label{cycliccom}
Let $G$ be an abelian $p$-group  and $\alpha$ a coprime automorphism of $G$. If $[G,\alpha]$ is cyclic, then $[G,\alpha]=[G,\alpha^{i}]$ for any integer $i$ such that $\alpha^{i}\ne 1$. 
\end{lemma}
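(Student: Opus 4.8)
The plan is to reduce everything to the restriction of $\alpha$ to the cyclic group $H:=[G,\alpha]$ and then to control the powers $\alpha^i$ through their action on the unique minimal subgroup $\Omega_1(H)$. We may assume $H\neq1$, since otherwise Lemma \ref{Lemmapre}(2) forces $G=C_G(\alpha)$, so $\alpha=1$, and no $i$ with $\alpha^i\neq1$ exists. First I would record the structural facts coming from the coprime action. By Lemma \ref{Lemmapre}(2), $G=C_G(\alpha)\times[G,\alpha]=C_G(\alpha)\times H$, and by Lemma \ref{Lemmapre}(1), $[H,\alpha]=[G,\alpha,\alpha]=[G,\alpha]=H$; combined with the decomposition $H=C_H(\alpha)\times[H,\alpha]$ this forces $C_H(\alpha)=1$, so that $\alpha$ acts fixed-point-freely on $H$. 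Since both $C_G(\alpha)$ and $H$ are $\alpha$-invariant and $G$ is abelian, a direct computation shows that for every $i$ one has $[G,\alpha^i]=[C_G(\alpha),\alpha^i]\,[H,\alpha^i]=[H,\alpha^i]$, because $\alpha$ (hence $\alpha^i$) acts trivially on $C_G(\alpha)$. In particular $\alpha^i=1$ on $G$ exactly when $\alpha^i$ is trivial on $H$, so it suffices to prove that $[H,\alpha^i]=H$ whenever $\alpha^i$ is nontrivial on $H$; equivalently, since $H=C_H(\alpha^i)\times[H,\alpha^i]$, it suffices to show $C_H(\alpha^i)=1$.

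The crucial point is that the action of $\langle\alpha\rangle$ on $\Omega_1(H)$ is faithful. Indeed, if some power $\alpha^k$ acted trivially on $\Omega_1(H)$, then, as $\alpha^k$ still has order coprime to $p$, Lemma \ref{Lemmapre}(3) would give that $\alpha^k$ acts trivially on the whole of $H$; since $C_H(\alpha)=1$ and $H\neq1$, this happens only when $\alpha^k=1$ on $H$. Thus the homomorphism $\langle\alpha\rangle\to\mathrm{Aut}(\Omega_1(H))$ given by restriction has trivial kernel.

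Now fix $i$ with $\alpha^i\neq1$ on $H$. By faithfulness, $\alpha^i$ acts nontrivially on $\Omega_1(H)$. If $C_H(\alpha^i)$ were nontrivial, then, $H$ being cyclic, $C_H(\alpha^i)$ would contain the unique minimal subgroup $\Omega_1(H)$, so $\alpha^i$ would centralize $\Omega_1(H)$, a contradiction. Hence $C_H(\alpha^i)=1$ and therefore $[H,\alpha^i]=H$, which yields $[G,\alpha^i]=H=[G,\alpha]$, as required. The only steps demanding care are the reduction to $H$ and the faithfulness argument; once the action on $\Omega_1(H)$ is known to be faithful, the cyclicity of $H$ makes the fixed-point-freeness of every nontrivial power immediate.
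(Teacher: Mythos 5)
Your proof is correct and follows essentially the same route as the paper's: decompose $G=C_G(\alpha)\times[G,\alpha]$ via Lemma \ref{Lemmapre}(2), use cyclicity of $[G,\alpha]$ to force $\Omega_1([G,\alpha])$ into any nontrivial centralizer $C_{[G,\alpha]}(\alpha^i)$, and then apply Lemma \ref{Lemmapre}(3) to conclude that $\alpha^i$ acts trivially, a contradiction. Your additional steps (the reduction $[G,\alpha^i]=[H,\alpha^i]$, the faithfulness of $\langle\alpha\rangle$ on $\Omega_1(H)$, and the degenerate case $H=1$) simply make explicit what the paper leaves implicit.
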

\begin{proof}
By  Lemma \ref{Lemmapre}(2) we have $G=C_{G}(\alpha)\times [G,\alpha]$. Suppose that $\alpha^i\neq1$ and $[G,\alpha]\neq[G,\alpha^{i}]$. Then $C_{[G,\alpha]}(\alpha^i)\neq1$. Since $[G,\alpha]$ is cyclic, we conclude that $\Omega_1([G,\alpha])\leq C_{[G,\alpha]}(\alpha^i)$ and therefore $\alpha^i$ acts trivially on $[G,\alpha]$. This implies that $\alpha^i=1$, a contradiction. 
\end{proof}

\begin{lemma}\label{autocyclic} Let $G$ be a cyclic group faithfully acted on by a group $A$. The following holds.
\begin{itemize}
\item[(1)] The group $A$ is abelian;
\item[(2)] If $G$ is a $p$-group and $A$ is a $p'$-group, then $A$ is cyclic.
\end{itemize}
  \end{lemma}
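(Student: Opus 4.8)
The plan is to use the fact that, since $A$ acts faithfully on $G$, the action map is injective, so $A$ embeds as a subgroup of $\operatorname{Aut}(G)$; everything then reduces to the well-understood structure of the automorphism group of a finite cyclic group. Recall (the paper's standing convention) that all groups here are finite, so $G$ is cyclic of some finite order.

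For part (1), write $G$ as cyclic of order $n$, so that $\operatorname{Aut}(G)$ is isomorphic to the multiplicative group of units modulo $n$, which is abelian. The faithful action yields an embedding of $A$ into $\operatorname{Aut}(G)$, and a subgroup of an abelian group is abelian; hence $A$ is abelian, as required.

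For part (2), take $G$ cyclic of order $p^{n}$, so that $\operatorname{Aut}(G)$ is the group of units modulo $p^{n}$, of order $p^{n-1}(p-1)$. I would split according to the parity of $p$. If $p$ is odd, this automorphism group is itself cyclic, and since every subgroup of a cyclic group is cyclic, the image of $A$, and therefore $A$ itself, is cyclic. If $p=2$, then $\operatorname{Aut}(G)$ has order $2^{n-1}$ and is thus a $2$-group; as $A$ is a $p'$-group, i.e.\ of odd order, its embedded image in a $2$-group must be trivial, whence $A=1$ is (trivially) cyclic.

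The only point demanding any care is the case $p=2$, where $\operatorname{Aut}(G)$ fails to be cyclic once $n\geq 3$ (it is a direct product of a cyclic $2$-group with a group of order $2$); this is precisely where the coprimeness hypothesis on $A$ is essential, since it forces $A$ to be trivial rather than merely a subgroup of a noncyclic $2$-group. Beyond invoking these standard facts about units modulo a prime power, no real obstacle arises.
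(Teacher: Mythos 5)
Your proposal is correct and follows essentially the same route as the paper: the paper's proof simply cites the isomorphism $\operatorname{Aut}(\mathbb{Z}/n\mathbb{Z})\cong(\mathbb{Z}/n\mathbb{Z})^{*}$ and calls both claims immediate, while you fill in the same deduction explicitly (embedding $A$ via faithfulness, abelianness of the unit group for (1), and the structure of $(\mathbb{Z}/p^{n}\mathbb{Z})^{*}$ with the odd/even case split for (2)). The extra care you take at $p=2$, where the unit group is a noncyclic $2$-group for $n\geq 3$ and coprimality forces $A=1$, is exactly the detail the paper leaves implicit.
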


\begin{proof} Both claims are immediate from the well-known fact that the group of automorphisms of the additive cyclic group $\Bbb Z/n\Bbb Z$ is isomorphic with the multiplicative group $(\Bbb Z/n\Bbb Z)^*$.
\end{proof}

\begin{lemma}\label{coradical}
Let $j\geq 2$ and  $G$ be a group containing a normal subgroup $N$. If $N\leq\delta_{j}^{*}(G)$  and  $\delta_{j}^{*}(G)/N$ is cyclic, then $\delta_{j}^{*}(G)=N$.  
\end{lemma}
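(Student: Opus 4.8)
The plan is to reduce everything to two applications of the stabilisation identity for the nilpotent residual, exploiting that the hypothesis $j\geq 2$ lets us descend two levels in the $\delta^*$-series. Write $R=\delta_j^*(G)$, $D=\delta_{j-1}^*(G)$ and $E=\delta_{j-2}^*(G)$ (so that $E=G$ when $j=2$); all three are normal in $G$ and $R\le D\le E$. First I would record the two facts $R=\gamma_\infty(D)$ and $D=\gamma_\infty(E)$, which hold by the recursive description $\delta_k^*(G)=\gamma_\infty(\delta_{k-1}^*(G))$ recalled in the introduction. Since the last term of the lower central series of a group $H$ satisfies $[\gamma_\infty(H),H]=\gamma_\infty(H)$, these give the two key identities
\[
R=[R,D]\qquad\text{and}\qquad D=[D,E].
\]

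Next I would bring in the cyclic hypothesis through the automorphism group. As $R$ and $N$ are normal in $G$ with $N\le R$, the group $E$ acts by conjugation on the cyclic group $R/N$, giving a homomorphism $\beta\colon E\to\operatorname{Aut}(R/N)$. By Lemma~\ref{autocyclic} (or simply because $\operatorname{Aut}(\Bbb Z/n\Bbb Z)\cong(\Bbb Z/n\Bbb Z)^{*}$) the image of $\beta$ is abelian. Consequently $\beta$ annihilates every commutator $[d,e]$ with $d\in D$ and $e\in E$, so that $\beta([D,E])=1$; equivalently, $[D,E]$ centralises $R/N$, that is $[R,[D,E]]\le N$.

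Finally I would substitute the identity $D=[D,E]$ into this inclusion to obtain $[R,D]\le N$, and then the identity $R=[R,D]$ forces $R\le N$. Since $N\le R$ by hypothesis, this yields $R=N$, as required. Note that everything is carried out directly inside $G$, with no need to pass to the quotient $G/N$.

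I expect the only genuine obstacle to be conceptual rather than computational: recognising that one must descend two steps (to $E=\delta_{j-2}^*(G)$) rather than argue with $D$ alone. Working with $D$ by itself is truly insufficient, since the nilpotent residual of a finite group can perfectly well be a nontrivial cyclic group (for instance $\gamma_\infty(S_3)\cong C_3$); it is precisely the hypothesis $j\ge 2$, which makes $D$ itself a nilpotent residual and hence yields $D=[D,E]$, that allows the abelianity of $\operatorname{Aut}(R/N)$ to be fed back into $R=[R,D]$ and collapse $R$ into $N$.
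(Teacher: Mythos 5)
Your proof is correct and is essentially the paper's own argument recast in commutator-identity form: both hinge on the abelianness of the automorphism group of a cyclic group (Lemma \ref{autocyclic}(1)) combined with the recursion $\delta_k^*(G)=\gamma_\infty(\delta_{k-1}^*(G))$ and the hypothesis $j\geq 2$. The paper passes to $G/N$ and notes that $\delta_j^*(G)\leq Z(G')$ forces $\delta_{j-1}^*(G)$ to be nilpotent (hence its residual $\delta_j^*(G)$ is trivial), whereas you stay inside $G$ and express the same collapse as $R=[R,D]\leq N$ via $D=[D,E]\leq E'$; the only real difference is that you place $D$ inside $E'$ with $E=\delta_{j-2}^*(G)$, where the paper places $D$ inside $G'$ via $D=\delta_{j-1}^*(G)\leq\gamma_\infty(G)\leq G'$.
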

\begin{proof} We pass to the quotient $G/N$ and without loss of generality assume that $N=1$. Therefore $\delta_{j}^{*}(G)$ is cyclic and so by Lemma \ref{autocyclic}(1) we have $\delta_{j}^{*}(G)\leq Z(G')$. It follows that $\delta_{j-1}^{*}(G)$ is nilpotent and, since $\delta_{j}^{*}(G)=\gamma_\infty(\delta_{j-1}^{*}(G))$, we deduce that $\delta_{j}^{*}(G)=1$. This completes the proof.
\end{proof}

The following lemma is well-known. The proof can be found for example in \cite{AST}.

\begin{lemma}\label{metaHall}
Let $G$ be a metanilpotent group, $P$  a Sylow $p$-subgroup of $\gamma_{\infty}(G)$ and $H$ a Hall $p'$-subgroup of $G$. Then $P=[P,H]$.
\end{lemma}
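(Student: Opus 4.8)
The plan is to exploit the metanilpotency to make $\gamma_\infty(G)$ nilpotent, reduce to the case of a $p$-group by passing to a suitable quotient, and then finish with a coprime action argument. First I would record that, since $G$ is metanilpotent, its nilpotent residual $L:=\gamma_\infty(G)$ is contained in the Fitting subgroup $F(G)$ and is therefore nilpotent. Consequently the Sylow $p$-subgroup $P$ of $L$ is characteristic in $L$, hence normal in $G$, and the Hall $p'$-subgroup $Q$ of $L$ is likewise characteristic in $L$ and normal in $G$. Passing to $\tilde G=G/Q$ I would check that $\gamma_\infty(\tilde G)=L/Q\cong P$ is a $p$-group, that $\tilde G$ stays metanilpotent, and that the image of $H$ is a Hall $p'$-subgroup of $\tilde G$; using Dedekind's modular law together with $P\cap Q=1$ one sees that proving the statement for $\tilde G$ yields $P=[P,H]$ in $G$. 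Thus I may assume from the outset that $L=\gamma_\infty(G)=P$ is a $p$-group and $G/P$ is nilpotent.

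In this reduced situation I would locate the Sylow $p$-subgroup $S$ of $G$ containing $P$. Since $G/P$ is nilpotent, $S/P$ is its (normal) Sylow $p$-subgroup, so $S$ is normal in $G$; moreover $G=SH$ with $S\cap H=1$, and $G/S\cong H$ is a quotient of the nilpotent group $G/P$, so $H$ is nilpotent. Because in the nilpotent group $G/P$ the Hall $p'$-subgroup $HP/P$ centralizes the Sylow $p$-subgroup $S/P$, I obtain the crucial containment $[S,H]\le P$.

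Now $H$ acts coprimely on the $p$-group $S$. Applying Lemma \ref{Lemmapre}(1) and using $[S,H]\le P$, I would deduce $[S,H]=[[S,H],H]\le[P,H]\le[S,H]$, whence $[P,H]=[S,H]$. Since $S$ is normal in $G$, the subgroup $[S,H]$ is normal in $\langle S,H\rangle=G$. In the quotient $\bar G=G/[S,H]$ one has $[\bar S,\bar H]=1$, so $\bar H$ centralizes the normal $p$-subgroup $\bar S$; as $\bar S$ is a $p$-group and $\bar H$ a $p'$-group, their intersection is trivial, and this forces $\bar G=\bar S\times\bar H$, a direct product of a $p$-group and a nilpotent $p'$-group, hence nilpotent. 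Therefore $\gamma_\infty(\bar G)=1$, that is, $P=\gamma_\infty(G)\le[S,H]=[P,H]$, which gives $P=[P,H]$.

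The only genuinely delicate points are the two structural reductions: turning $\gamma_\infty(G)$ into a $p$-group (so that the relevant Sylow subgroup $S$ becomes normal) and securing $[S,H]\le P$, which is exactly what lets the coprime identity $[S,H]=[S,H,H]$ collapse $[S,H]$ into $[P,H]$. Once these are in place, recognizing $G/[S,H]$ as a direct product and hence nilpotent is routine, and I expect the containment $[S,H]\le P$ to be the main thing to verify carefully.
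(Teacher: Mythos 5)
Your proof is correct. Note that the paper itself gives no internal proof of this lemma: it is quoted as well-known, with the reader referred to \cite{AST}, so your argument is not so much an alternative route as a self-contained derivation of a fact the paper only cites. Your steps all check out: since $G$ is metanilpotent, $\gamma_\infty(G)$ is nilpotent, so $\gamma_\infty(G)=P\times Q$ with both factors normal in $G$; factoring out $Q$ is handled correctly (the Dedekind pullback works because $[P,H]\le P$, as $P$ is normal in $G$, and $P\cap Q=1$), reducing to the case $\gamma_\infty(G)=P$. Then the Sylow $p$-subgroup $S$ of $G$ containing $P$ is normal because its image in the nilpotent group $G/P$ is the normal Sylow $p$-subgroup there; the containment $[S,H]\le P$ follows from the fact that Sylow and Hall subgroups of the nilpotent group $G/P$ centralize each other; the coprime identity of Lemma \ref{Lemmapre}(1) then gives $[S,H]=[S,H,H]\le[P,H]\le[S,H]$; and $[S,H]$ is normal in $G=\langle S,H\rangle$ by the standard commutator identities, so the quotient $G/[S,H]$ decomposes as $\bar S\times\bar H$, a direct product of a $p$-group and a nilpotent group (as $\bar H$ is a quotient of $G/P$), hence nilpotent, forcing $P=\gamma_\infty(G)\le[S,H]=[P,H]$. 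This is essentially the standard argument for statements of this type, and it uses only tools already available in the paper, so it would in fact make the paper self-contained at this point.
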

 
The next lemma will be very useful.
\begin{lemma}\label{1113} Let $y_1,\dots,y_{j+1}$ be powers of $\delta_j^*$-commutators in $G$. Suppose that the elements $y_1,\dots, y_{j+1}$ normalize a subgroup $N$ such that $(|y_i|,|N|)=1$ for every $i=1,\dots,j+1$. Then for every $g\in N$ the element $[g,y_1,\dots,y_{j+1}]$ is a $\delta_{j+1}^*$-commutator.
\end{lemma}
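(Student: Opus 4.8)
The plan is to argue by induction on $j$. Write $w=[g,y_1,\dots,y_{j+1}]$ and observe first that, since each $y_i$ normalizes $N$ and $g\in N$, every left-normed commutator $[g,y_1,\dots,y_k]$ again lies in $N$; in particular $w\in N$. The natural decomposition to exploit is
\[
w=\bigl[\,[g,y_1,\dots,y_j],\,y_{j+1}\,\bigr].
\]
Accordingly I would set $u=[g,y_1,\dots,y_j]\in N$ and aim to recognise $u$ as a $\delta_j^*$-commutator and $y_{j+1}$ as a power of a $\delta_j^*$-commutator whose order is coprime to that of $u$; granting this, $w=[u,y_{j+1}]$ is a $\delta_{j+1}^*$-commutator directly from the definition, since $u=u^1$ is a power of a $\delta_j^*$-commutator.

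Two elementary facts drive the argument. The first is a monotonicity property of the coprime derived words: \emph{every $\delta_j^*$-commutator is also a $\delta_{j-1}^*$-commutator}, and consequently every power of a $\delta_j^*$-commutator is a power of a $\delta_{j-1}^*$-commutator. This follows by a short induction on $j$: for $j=1$ it is immediate because every element is a $\delta_0^*$-commutator, while in the inductive step a $\delta_j^*$-commutator $[a,b]$ has $a,b$ powers of $\delta_{j-1}^*$-commutators, hence, by the inductive hypothesis, powers of $\delta_{j-2}^*$-commutators of coprime orders, so that $[a,b]$ is by definition a $\delta_{j-1}^*$-commutator. The second fact is that the coprimeness is automatically preserved: as $u\in N$ its order divides $|N|$, and $(|N|,|y_{j+1}|)=1$ forces $(|u|,|y_{j+1}|)=1$.

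With these in hand the induction runs smoothly. For the base case $j=0$ the elements $y_1,\dots,y_{j+1}$ are powers of $\delta_0^*$-commutators, i.e.\ arbitrary, and $[g,y_1]$ with $g\in N$ of order coprime to $|y_1|$ is a $\delta_1^*$-commutator by definition. For the inductive step, the elements $y_1,\dots,y_j$ are powers of $\delta_j^*$-commutators, hence by monotonicity powers of $\delta_{j-1}^*$-commutators; they normalize $N$ and satisfy $(|y_i|,|N|)=1$. Applying the inductive hypothesis to these $j$ elements yields that $u=[g,y_1,\dots,y_j]$ is a $\delta_j^*$-commutator. Combining this with the coprimeness $(|u|,|y_{j+1}|)=1$ and the fact that $y_{j+1}$ is a power of a $\delta_j^*$-commutator, we conclude that $w=[u,y_{j+1}]$ is a $\delta_{j+1}^*$-commutator, closing the induction.

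The point that requires genuine care, and which I would regard as the main obstacle, is the interaction between the monotonicity of the words $\delta_j^*$ and the bookkeeping of the coprime-order hypotheses. One must verify that descending from $\delta_j^*$-commutators to $\delta_{j-1}^*$-commutators leaves the coprimeness conditions needed to invoke the inductive hypothesis intact, and that the left-normed commutators genuinely remain inside $N$ at every stage, so that all the orders involved stay coprime to $|y_{j+1}|$. Once this is confirmed, the decomposition of $w$ plus the definition of $\delta_{j+1}^*$-commutator deliver the result with no further computation.
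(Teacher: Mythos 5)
Your proof is correct and is in essence the same as the paper's: both peel the commutator apart one entry at a time, using that membership of the intermediate commutators in $N$ forces the required coprimality, and that a power of a $\delta_j^*$-commutator is a power of a $\delta_i^*$-commutator for every $i\le j$. The only difference is organizational --- the paper runs its induction on the position $i$ inside the fixed commutator $[g,y_1,\dots,y_{i+1}]$, while you induct on $j$ itself and make explicit the monotonicity fact that the paper leaves implicit in its ``easy induction''.
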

\begin{proof} We note that all elements of the form $[g,{y_1,\dots,y_i}]$ are of order prime to $|y_{i+1}|$. An easy induction on $i$ shows that whenever $i\leq j$ the element $[g,y_1,\dots,y_{i+1}]$ is a $\delta_{i+1}^*$-commutator. The lemma follows.
\end{proof}

 \begin{lemma}\label{copgen}
 
 Let $G$  be a group, $P$ a normal $p$-subgroup of $G$ and $x$ a $p'$-element in $G$. Let $j\geq 1$ be an integer. Then we have

\begin{itemize}
\item [(1)] The subgroup $[P,x]$ is generated by $\gamma_{j}^{*}$-commutators. 
\item [(2)] If $P$ is abelian, then every element of $[P,x]$ is a $\gamma_{j}^{*}$-commutator.
\item[(3)] If $x$ is a power of a $\delta_{j-1}^{*}$-commutator, then $[P,x]$ is generated by $\delta_{j}^{*}$-commutators.
\item[(4)] If $x$ is a power of a $\delta_{j-1}^{*}$-commutator and $P$ is abelian, then every element of $[P,x]$ is a $\delta_{j}^{*}$-commutator.
\end{itemize}
\end{lemma}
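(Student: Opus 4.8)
The plan is to induct on $j$, using as the central tool the coprime commutator identity $[P,x]=[[P,x],x]=[P,x,x]$ furnished by Lemma \ref{Lemmapre}(1) (applicable since $P$ is a $p$-group and $x$ a $p'$-element, so $(|P|,|x|)=1$). Writing $Q=[P,x]$, this identity says $Q=[Q,x]$, which is precisely what lets the commutators manufactured at level $j-1$ be fed back as the first entries of the commutators manufactured at level $j$. I would treat the two abelian statements (2) and (4) first, since they are the sharpest, and then deduce the ``generated by'' statements (1) and (3) from them in spirit, though in fact (1) and (3) run as their own parallel inductions.

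For the abelian parts, note that when $P$ is abelian the map $g\mapsto[g,x]$ is an endomorphism of $P$, because $[ab,x]=[a,x]^b[b,x]=[a,x][b,x]$; its image is exactly $Q$. Restricting to $Q$ and using $[Q,x]=[P,x,x]=[P,x]=Q$, this endomorphism maps the finite group $Q$ onto itself and is therefore a bijection, so every $w\in Q$ can be written as $w=[h,x]$ with $h\in Q$. For (2) the base case $j=1$ is trivial; for $j\ge2$ the inductive hypothesis gives that every element of $Q=[P,x]$ is a $\gamma_{j-1}^*$-commutator, so the element $h$ above is a $\gamma_{j-1}^*$-commutator, and since $h$ is a $p$-element while $x$ is a $p'$-element we get $(|h|,|x|)=1$; hence $w=[h,x]$ is a $\gamma_j^*$-commutator. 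Part (4) is word-for-word the same with $\delta$ in place of $\gamma$, once the monotonicity remark below is in hand.

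To pass from ``single commutator'' to ``generated by'' for (1) and (3), I would use that $Q=[Q,x]$ is generated, as a normal subgroup of $Q$, by $\{[s,x]:s\in S\}$ for any generating set $S$ of $Q$ (via $[ab,x]=[a,x]^b[b,x]$). Choosing $S$ to consist of $\gamma_{j-1}^*$-commutators, which is possible by part (1) at level $j-1$, each $[s,x]$ becomes a $\gamma_j^*$-commutator (here $s\in P$ is coprime to $x$), and since the class of $\gamma_j^*$-commutators is closed under conjugation, the normal closure is again generated by $\gamma_j^*$-commutators; this proves (1), and (3) follows by the identical scheme.

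The one point that genuinely needs care—and where the $\delta$-induction could stall—is that the inductive hypothesis for (3) and (4) at level $j-1$ requires $x$ to be a power of a $\delta_{j-2}^*$-commutator, whereas we only assume $x$ is a power of a $\delta_{j-1}^*$-commutator. I would bridge this with the monotonicity statement that \emph{every $\delta_j^*$-commutator is a $\delta_{j-1}^*$-commutator} (so that every power of a $\delta_{j-1}^*$-commutator is a power of a $\delta_{j-2}^*$-commutator), proved by a short side induction: a $\delta_j^*$-commutator $[a,b]$ has $a,b$ powers of $\delta_{j-1}^*$-commutators, which by the inductive hypothesis are powers of $\delta_{j-2}^*$-commutators, whence $[a,b]$ is by definition a $\delta_{j-1}^*$-commutator (the base $j=1$ being trivial since every element is a $\delta_0^*$-commutator). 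I expect no substantial difficulty beyond this; the real work is the bookkeeping of keeping the coprimality conditions and the ``power of a commutator'' qualifiers consistent across the nested inductions.
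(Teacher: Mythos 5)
Your argument is correct, and it runs on the same engine as the paper's proof: the coprime-action identity of Lemma \ref{Lemmapre}(1), which makes $Q=[P,x]$ satisfy $Q=[Q,x]$, so that commutators produced at level $j-1$ can be fed back in as first entries at level $j$. The paper phrases this non-inductively, writing $[P,x]=[P,\underbrace{x,\dots,x}_{j-1}]$ and observing that every element $[g,\underbrace{x,\dots,x}_{j-1}]$ with $g\in P$ is automatically a $\gamma_j^*$-commutator; your induction on $j$ is exactly the unrolled form of that computation, and your abelian-case step (surjectivity of $g\mapsto[g,x]$ on $Q$) matches the paper's. You diverge at two technical points, both legitimately. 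For the non-abelian claims (1) and (3), the paper passes to $G/\Phi(P)$ and quotes the abelian case --- a Frattini-quotient reduction --- whereas you show directly that $[Q,x]$ is the normal closure in $Q$ of $\{[s,x]:s\in S\}$ for any generating set $S$ of $Q$, via $[ab,x]=[a,x]^b[b,x]$ together with conjugation-invariance of the set of coprime commutators; both routes are standard, and yours is arguably more self-contained since nothing has to be lifted back through a quotient. For the $\delta$-claims, the paper invokes its Lemma \ref{1113}, whose own short inductive proof tacitly uses that a power of a $\delta_j^*$-commutator is also a power of a $\delta_i^*$-commutator for $i\le j$; you isolate precisely this monotonicity statement and prove it by a side induction, which is the right way to bridge the index mismatch you flagged (the inductive hypothesis for (3) and (4) wants $x$ to be a power of a $\delta_{j-2}^*$-commutator). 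In effect your monotonicity lemma is the hidden content of Lemma \ref{1113}. One cosmetic point: the base case $j=1$ of part (4) is not ``trivial'' in the same sense as for part (2) --- not every element is a $\delta_1^*$-commutator --- but it is covered by your one-step computation $w=[h,x]$ with both entries $\delta_0^*$-commutators of coprime orders, so nothing is missing.
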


\begin{proof} In view of Lemma \ref{Lemmapre}(1) $[P,x]=[P,\underbrace{x,\dots,x}_{j-1}]$. Suppose first that $P$ is abelian. Note that every element of the form $[g,\underbrace{x,\dots,x}_{j-1}]$, with $g\in P$, is a $\gamma_{j}^{*}$-commutator. Since $P$ is abelian, every element of $[P,x]$ is of the form $[g,\underbrace{x,\dots,x}_{j-1}]$ for a suitable $g\in P$ and therefore every element of $[P,x]$ is a $\gamma_{j}^{*}$-commutator. Now drop the assumption that $P$ is abelian. We wish to show that $[P,x]$ is generated by $\gamma_{j}^{*}$-commutators. Passing to the quotient $G/\Phi(P)$ we may assume that $P$ is elementary abelian and use the result for the abelian case.  This proves Claims (1) and (2).

The proof of  Claims (3) and (4) follows a similar argument using Lemma \ref{1113}.
\end{proof}

The well-known Focal Subgroup Theorem \cite[Theorem 7.3.4]{Gorenstein} states that if $G$ is a group and $P$ a Sylow $p$-subgroup of $G$, then $P\cap G'$ is generated by the set of commutators $\{ [g,z] \mid g\in G,\ z\in P,\ [g,z]\in P \}$. In particular, it follows that $P\cap G'$ can be generated by commutators  lying in $P$.  This observation led to the question on generation of Sylow subgroups of verbal subgroups of finite groups.  The main result of \cite{focal} is that $P\cap w(G)$ is generated by powers of $w$-values, whenever $w$ is a multilinear commutator word. More recently an analogous result on the generation of Sylow subgroups of $\delta^*_j(G)$ in the case where $G$ is soluble was proved in \cite{AST}.  More precisely we have the following lemma that  we will need later on.

\begin{lemma}[\cite{AST}, Lemma 2.6] 
\label{foca} Let $j\ge 0$. Let $G$ be a soluble group and $P$ a Sylow $p$-subgroup of $G$. Then $P\cap \delta^*_j(G)$ is generated by powers of $\delta^*_j$-commutators.
\end{lemma}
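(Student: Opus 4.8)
The plan is to induct on $|G|$, proving the statement for all primes simultaneously, and to play off the two descriptions $\delta_j^*(G)=\gamma_\infty(L)$ with $L=\delta_{j-1}^*(G)$. The case $j=0$ is immediate, since every element is a $\delta_0^*$-commutator, and the case $p\nmid|\delta_j^*(G)|$ is vacuous. First I would reduce to $O_{p'}(G)=1$: passing to $G/O_{p'}(G)$ and using that the image of $\delta_j^*(G)$ is $\delta_j^*(G/O_{p'}(G))$, that $\delta_j^*$-commutators and their powers lift through the quotient, and that the $p$-part of a power of a $\delta_j^*$-commutator is again such a power, one recovers generators of the Sylow $p$-subgroup of $\delta_j^*(G)$ from those upstairs. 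After this reduction $F(G)=O_p(G)$ is self-centralizing, which is what makes the coprime machinery bite.

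The engine is Lemma \ref{copgen}: if $R$ is a normal $p$-subgroup of $G$ and $x$ is a $p'$-element that is a power of a $\delta_{j-1}^*$-commutator, then $[R,x]$ is generated by $\delta_j^*$-commutators (and consists of them when $R$ is abelian). To feed it enough elements $x$, I would use the statement at the index $j-1$ as an inductive input, by which the Sylow subgroups, hence a Hall $p'$-subgroup $H$, of $L=\delta_{j-1}^*(G)$ are generated by powers of $\delta_{j-1}^*$-commutators. When $L$ is metanilpotent the subgroup $D=\gamma_\infty(L)$ is nilpotent, so $Q=O_p(D)$ is normal in $G$; Lemma \ref{metaHall} gives $Q=[Q,H]$, and expanding $[Q,H]$ over generators $x$ of $H$ and applying Lemma \ref{copgen} shows $Q$ is generated by $\delta_j^*$-commutators. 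This settles the ``metanilpotent base''.

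For general $L$ I would peel off the nilpotent top: set $M=\gamma_\infty(D)=\delta_{j+1}^*(G)$, so that $L/M$ is metanilpotent. The quotient $QM/M$, being the Sylow $p$-subgroup of $\delta_j^*(G/M)$, is handled by applying the result to the smaller group $G/M$, with the lifted generators taken to be $p$-elements inside $Q$; and the bottom layer $Q\cap M$ is the Sylow $p$-subgroup of $\delta_{j+1}^*(G)$. Since a routine induction shows that every $\delta_{j+1}^*$-commutator is a $\delta_j^*$-commutator, it suffices to generate this bottom layer by powers of $\delta_{j+1}^*$-commutators, which is the present statement one layer deeper. The main obstacle is organizing these inputs coherently: the top wants the index $j-1$ while the bottom wants the index $j+1$ for the same $G$, so they cannot be fed by a single monotone induction on $j$, and one must combine induction on $|G|$ (for the quotient $G/M$ and for the proper subgroup $D$) with induction on the Fitting height of $L$. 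The genuinely delicate point hidden inside this is a chief factor of $D$ that is centralized by $L$, that is, a central $p$-chief factor, where the commutators $[R,x]$ produced above degenerate; here I expect to need the coprime-action Lemma \ref{Lemmapre}(3) together with the self-centralizing Fitting subgroup secured by the reduction $O_{p'}(G)=1$ in order to force this configuration back into the nilpotent (indeed $p$-group) situation already handled.
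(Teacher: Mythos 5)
First, a point of reference: the paper does not prove this statement at all --- it is imported verbatim from \cite[Lemma 2.6]{AST} --- so there is no internal proof to compare your attempt with, and I am judging your proposal on its own terms.

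Your individual ingredients are sound: the reduction modulo $O_{p'}(G)$ (taking $p$-parts of powers and conjugating by kernel elements) works; Lemma \ref{copgen}(3) together with Lemma \ref{metaHall} and the conjugation-invariance of the set of $\delta_j^*$-commutators does settle the case where $L=\delta_{j-1}^*(G)$ is metanilpotent, \emph{provided} one already knows that a Hall $p'$-subgroup of $L$ is generated by powers of $\delta_{j-1}^*$-commutators; and the splitting along $M=\delta_{j+1}^*(G)$, including the fact that every $\delta_{j+1}^*$-commutator is a $\delta_j^*$-commutator, is correct. The genuine gap is the one you flag and then do not close: the induction is circular, and your proposed remedy does not break the circle. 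Write $S(G,j)$ for the statement. Your metanilpotent case (which is exactly the case $\delta_{j+1}^*(G)=1$) consumes $S(G,j-1)$, and your case $\delta_{j+1}^*(G)\neq 1$ consumes $S(G,j+1)$; both calls concern the \emph{same} group $G$, so induction on $|G|$ is silent about them. Induction upward on $j$ forbids the second call, induction downward on $j$ forbids the first, and induction on the Fitting height of $L$ also forbids the first, because passing to $\gamma_\infty$ strictly lowers Fitting height, so $\delta_{j-2}^*(G)$ has \emph{larger} height than $\delta_{j-1}^*(G)$ and the call to index $j-1$ raises your parameter. The deadlock is real, not cosmetic: if $G$ has Fitting height $h$, then $S(G,h-2)$ needs $S(G,h-1)$ (your bottom layer, since $\delta_{h-1}^*(G)\neq1$), while $S(G,h-1)$ is precisely the metanilpotent case and needs $S(G,h-2)$ (Hall-subgroup generation). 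Nor does your aside about the proper subgroup $D=\delta_j^*(G)$ help: applying the lemma to $D$ as an abstract group yields generation by coprime commutators of \emph{arbitrary} elements of $D$, which need not be $\delta_{j+1}^*$-commutators of $G$, since elements of $D$ are products, not powers, of $\delta_j^*$-commutators. What is missing is a generation input independent of the statement being proved --- for instance the tower-theoretic fact \cite[Lemma 2.5]{forum}, which this paper itself invokes in the proof of Lemma \ref{mumu} and which produces subgroups generated by $\delta_{i-1}^*$-commutators with no appeal to Lemma \ref{foca}.

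A second, smaller gap: your closing sentence about central $p$-chief factors is a hope, not an argument. Lemma \ref{Lemmapre}(3) and the self-centralizing Fitting subgroup do not make such factors go away. In $G={\rm SL}(2,3)$ one has $O_{2'}(G)=1$ and $C_G(F(G))\leq F(G)$, yet the minimal normal subgroup $N=Z(G)=\langle -1\rangle$ lies inside $\delta_1^*(G)=Q_8$ and is centralized by every element of $G$; moreover Lemma \ref{Lemmapre}(3) cannot be applied to force anything, since $Q_8$ is not abelian and the elements of order $3$ act trivially on $N$ but not on $Q_8$. Here $-1$ is not a coprime commutator at all (the coprime commutators lying in $Q_8$ are exactly $1$ and $\pm i,\pm j,\pm k$), and it is reached only as the \emph{square} of one, e.g.\ $[i,x]^2$ with $x$ of order $3$. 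So in the degenerate configuration the conclusion really does hinge on powers of commutators produced elsewhere in the group, and your sketch contains no mechanism that produces them.
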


It is natural to conjecture that Lemma \ref{foca} actually holds for all finite groups. In particular, the corresponding result in \cite{focal} was proved without the assumption that $G$ is soluble. It seems though that proving Lemma \ref{foca} for arbitrary groups is a complicated task. Indeed, one of the tools used in \cite{focal} is the proof of the Ore Conjecture by  Liebeck, O'Brien,  Shalev, and  Tiep \cite{lost} that every element of any nonabelian finite simple group is a commutator. Recently it was conjectured in \cite{forum} that every element of a finite simple group is a commutator of elements of coprime orders. If this is confirmed, proving Lemma \ref{foca} for arbitrary groups would be easy. However the conjecture that every element of a finite simple group is a commutator of elements of coprime orders is proved only for the alternating groups \cite{forum} and the groups ${\rm PSL}(2,q)$ \cite{pellegrini}.

\begin{lemma}\label{pri} Let $G$ be a  noncyclic $p$-group that can be covered by $m$ cyclic subgroups. Then $|G|$ is $m$-bounded.
\end{lemma}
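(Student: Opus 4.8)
The plan is to split the statement into two independent bounds, one on the prime $p$ and one on the exponent of $G$, and to glue them together with the crudest possible counting argument. If $C_1,\dots,C_m$ are the cyclic subgroups covering $G$, then
\[
|G|=\Bigl|\bigcup_{i=1}^m C_i\Bigr|\le\sum_{i=1}^m|C_i|\le m\cdot\exp(G).
\]
So it suffices to prove that both $p$ and $\log_p\exp(G)$ are $m$-bounded.

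First I would bound $p$ (and, as a byproduct, the number of generators of $G$) by passing to $\bar G=G/\Phi(G)$. The images of the $C_i$ cover $\bar G$, a noncyclic elementary abelian $p$-group of some rank $d\ge2$, and since a homomorphic image of a cyclic group is cyclic, each image has order at most $p$. Because $d\ge2$, covering $\bar G$ by subgroups of order $\le p$ forces one to use all of its $(p^d-1)/(p-1)$ subgroups of order $p$, whence $(p^d-1)/(p-1)\le m$. This already gives $p\le m-1$ and $p^{\,d-1}\le m$, so $p$ and $|\bar G|=p^d$ are $m$-bounded.

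The heart of the matter is the exponent bound. Writing $\exp(G)=p^e$, I would exhibit $e$ elements of $G$, no two of which lie in a common cyclic subgroup; since each must lie in some $C_i$ and such ``cyclically independent'' elements must occupy distinct $C_i$, this yields $e\le m$. To construct them, fix $y$ of order $p^e$ together with an element $z$ of order $p$ satisfying $[y,z]=1$ and $z\notin\langle y\rangle$, and put $w_k=y^{p^k}z$ for $0\le k\le e-1$. Since $z$ commutes with $y$ and $\langle y^{p^k}\rangle\cap\langle z\rangle=1$, the element $w_k$ has order exactly $p^{e-k}$, so the $w_k$ have pairwise distinct orders; and if $w_k$ and $w_{k'}$ with $k<k'$ lay in a single cyclic subgroup, then necessarily $w_{k'}\in\langle w_k\rangle$, which a short $p$-adic valuation computation in the exponents of $y$ and $z$ rules out. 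Feeding $p\le m$ and $e\le m$ back into the union bound gives $|G|\le m\,p^{e}\le m\cdot m^{m}$, which is $m$-bounded.

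The step requiring genuine care — and the one I expect to be the main obstacle — is guaranteeing a commuting element $z$ of order $p$ outside $\langle y\rangle$. Such a $z$ exists unless $\Omega_1(C_G(y))\le\langle y\rangle$, i.e.\ unless $C_G(y)$ has a unique subgroup of order $p$; since $y$ has maximal order and lies in $Z(C_G(y))$, this forces $C_G(y)=\langle y\rangle$ once $e\ge2$ (the generalized quaternion alternative being incompatible with $|y|=p^{e}\ge4$). In this self-centralizing case $\langle y\rangle$ is a self-centralizing cyclic subgroup and $G$ is essentially metacyclic; here I would recover an independent family by choosing $u\in N_G(\langle y\rangle)\setminus\langle y\rangle$ (normalizers grow in $p$-groups), replacing $z$ by $u$, and redoing the order computation using that $u$ acts on $\langle y\rangle$ as $y\mapsto y^{s}$ with $s\equiv1\pmod p$. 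Alternatively one can bypass the difficulty via a maximal normal abelian subgroup $A$: it is self-centralizing, so $G/A\hookrightarrow\mathrm{Aut}(A)$, and when $A$ is noncyclic the argument above bounds $|A|$, hence $|\mathrm{Aut}(A)|$ and $|G|$ — which once again isolates the cyclic, metacyclic-type configuration as the single delicate case to be handled by the finer analysis.
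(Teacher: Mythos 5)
Your skeleton is sound and in places cleaner than the paper's: the union bound $|G|\le\sum_i|C_i|\le m\exp(G)$, the Frattini-quotient count giving $(p^d-1)/(p-1)\le m$, and the family $w_k=y^{p^k}z$ of pairwise ``cyclically independent'' elements are all correct, and together they settle every case in which a commuting element $z$ of order $p$ outside $\langle y\rangle$ exists (in particular the abelian case, which the paper handles by essentially the same distinct-orders idea). The gap sits exactly where you predicted it, but your proposed repair does not work at $p=2$. In the self-centralizing case $C_G(y)=\langle y\rangle$ you invoke ``$s\equiv1\pmod p$'', which for $p=2$ is vacuous: every odd $s$ satisfies it. What your order computation actually needs is $s\equiv1\pmod{p^{e-1}}$, so that $[y,u]=y^{s-1}$ is central of order $p$; for odd $p$ this is automatic once $u^p\in\langle y\rangle$, because the Sylow $p$-subgroup of $\mathrm{Aut}(\mathbb{Z}/p^e\mathbb{Z})$ is cyclic and its elements of order $p$ are exactly $y\mapsto y^{1+\lambda p^{e-1}}$, but for $p=2$ the order-two automorphisms also include $s\equiv-1$ and $s\equiv2^{e-1}-1$. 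Concretely, take $G$ dihedral of order $2^{e+1}$ with $|y|=2^e$: then $C_G(y)=\langle y\rangle$, every $u\notin\langle y\rangle$ inverts $y$, and all of your elements $w_k=y^{2^k}u$ are involutions, so the distinct-orders family collapses; the generalized quaternion and semidihedral groups fail similarly (all elements outside $\langle y\rangle$ have order $4$, resp.\ $2$ or $4$). So the case your proof leaves open is nonempty, and no choice of $u$ rescues the computation there.

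The missing idea is to replace the distinct-orders count, in this configuration, by a count of elements of \emph{smallest} order in a coset of the self-centralizing cyclic subgroup $N$ --- which is what the paper does. It takes $y$ of least order in $G\setminus N$ (that order is $p$ or $4$ by \cite[Theorem 5.4.4]{Gorenstein}), observes that every element of the coset $[N,y]y^{-1}$ is conjugate to $y^{-1}$, and uses the fact that a cyclic $p$-group contains at most $p-1$ elements of order $p$ and at most two of order $4$; hence covering that coset already requires at least $|[N,y]|/p$ cyclic subgroups, giving $|[N,y]|\le mp\le m(m-1)$, which bounds $|N:C_N(y)|$ and then $|G|$. (In the dihedral example this is just the remark that the $2^{e-1}$ reflections in a coset generate pairwise distinct cyclic subgroups, so $m\ge2^{e-1}$.) Your fallback suggestion --- a maximal normal abelian subgroup $A=C_G(A)$ with $G/A\hookrightarrow\mathrm{Aut}(A)$ --- is precisely the paper's reduction, but as you yourself note it terminates in the same unresolved case ($A$ cyclic), so as written it does not close the gap either.
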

\begin{proof} To start with, we consider the case where $G$ is abelian. We notice that the minimal number of generators of $G$ is at most $m$ and therefore it is sufficient to bound the exponent of $G$. The group $G$ contains an elementary abelian subgroup, say $J$, of order $p^2$. One requires precisely $p+1$ cyclic subgroups to cover $J$. Hence $p+1\leq m$. Let the exponent of $G$ be $p^n$. Since $p\leq m-1$, it is sufficient to bound $n$. We assume that $n\geq 2$. Choose an element $a\in G$ whose order is $p^n$ and an element $b\in G\setminus\langle a\rangle$ of order $p$. Set $H=\langle a,b\rangle$. It is clear that any covering of $H$ by cyclic subgroups requires some subgroups of order $p^n$. Further, the element $a^pb$ has order $p^{n-1}$ and it is not contained in any cyclic subgroup of order $p^n$. Therefore any covering of $H$ by cyclic subgroups requires  also some subgroups of order $p^{n-1}$. Assuming that $n\geq 3$ we now consider the element $a^{p^2}b$. This has order $p^{n-2}$ and is not contained in any cyclic subgroup of order $p^{n-1}$. Thus any covering of $H$ by cyclic subgroups requires some subgroups of order $p^{n-2}$. It now becomes clear that any covering of $H$ by cyclic subgroups requires some subgroups of all possible orders $p^n,p^{n-1},\dots,p$. It follows that $n\leq m$ and in the case where $G$ is abelian the lemma is proved.

We now drop the assumption that $G$ is abelian. Let $N$ be a maximal normal abelian subgroup. Then $N=C_G(N)$. If $N$ is noncyclic, then by the previous  argument  $|N|$ is $m$-bounded and, since $G/N$ embeds in $Aut\, N$,  the order of $G$ is $m$-bounded, too. Hence we assume that $N$ is cyclic of order $p^n$. The quotient $G/G'$ is abelian and noncyclic. Hence $G/G'$ contains an elementary abelian subgroup of order $p^2$. We have remarked in the previous paragraph that the existence of such a subgroup implies that $p\leq m-1$ and so now it is sufficient to bound $n$. Let $y$ be an element of least order in $G\setminus N$. In view of \cite[Theorem 5.4.4]{Gorenstein} the order of $y$ is either $p$ or 4. Let $P=N\langle y\rangle$. Since $C_P(y)$ is abelian, the previous paragraph shows that $|C_P(y)|$ is $m$-bounded. Hence, it is sufficient to bound the index of $C_N(y)$ in $N$. This is precisely the order of the subgroup $[N,y]$. Observe that all elements in the coset $[N,y]y^{-1}$ are conjugate to $y^{-1}$ and so $P$ contains at least $|[N,y]|$ elements of order $|y|$ (which is either $p$ or 4). Any nontrivial cyclic $p$-group contains exactly $p-1$ elements of order $p$ and at most two elements of order 4. Therefore one requires at least $|[N,y]|/p$ cyclic subgroups in $P$ to cover the coset $[N,y]y^{-1}$. Hence $|[N,y]|/p\leq m$ and since $p\leq m-1$, we deduce that $|[N,y]|\leq m(m-1)$. The proof is complete.
\end{proof}

We close this preliminary section with the following results about coprime actions.

\begin{lemma}
\label{coctionbounded}
Let $j$ be a positive integer, $P$ a $p$-group of class $c$ and $\alpha$ a $p'$-automorphism of $P$. Suppose that $P$ has $m$ cyclic subgroups whose union contains all elements of the form $[x,\underbrace{\alpha,\dots,\alpha}_{j}]$, with $x\in P$. If $[P,\alpha]$ is noncyclic, then the order of $[P,\alpha]$ is $(c,m)$-bounded. 
\end{lemma}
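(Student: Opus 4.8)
Write $K=[P,\alpha]$. By Lemma \ref{Lemmapre}(1) we have $K=[P,\underbrace{\alpha,\dots,\alpha}_{i}]$ for every $i\ge1$, so in particular all the elements $[x,\underbrace{\alpha,\dots,\alpha}_{j}]$ with $x\in P$ lie in $K$, and by hypothesis they are covered by the union of $m$ cyclic subgroups $C_1,\dots,C_m$ of $P$. The strategy is to reduce the whole statement to Lemma \ref{pri} applied to the abelianization $K/K'$, and then to recover a bound on $|K|$ from the nilpotency class $c$.

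The key step is to examine $K/K'$, where $K'=\gamma_2(K)$. Passing to $\bar P=P/K'$, the subgroup $\bar K=K/K'=[\bar P,\alpha]$ is abelian and $\alpha$-invariant, and Lemma \ref{Lemmapre}(1) again gives $[\bar K,\alpha]=\bar K$. Viewing $\bar K$ additively as a module over $\langle\alpha\rangle$, this says that $(\alpha-1)$ is surjective on $\bar K$, hence so is $(\alpha-1)^{j}$; equivalently, every element of $\bar K$ has the form $[\bar x,\underbrace{\alpha,\dots,\alpha}_{j}]$ for some $\bar x\in\bar K$. Lifting each such $\bar x$ to an element $x\in K$, we see that every element of $K/K'$ is the image of one of the covered commutators $[x,\underbrace{\alpha,\dots,\alpha}_{j}]$. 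Consequently, denoting by $\bar C_i$ the image of $C_i$ in $\bar P$, the group $K/K'$ is covered by the $m$ cyclic subgroups $\bar C_i\cap(K/K')$. Since $K$ is noncyclic, the $p$-group $K/K'$ is noncyclic as well, and Lemma \ref{pri} bounds $|K/K'|$ in terms of $m$. In particular the rank $d=d(K)$, the prime $p$, and $\exp(K/K')$ are all $m$-bounded. I expect this to be the heart of the argument, and the place where the coprimality of $\alpha$ is indispensable: it is precisely what makes $(\alpha-1)^{j}$ surjective and thus forces the covered commutators to fill the entire abelianization.

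It remains to pass from the bounded abelianization to a bound on $|K|$, and here I would use only that $K$ is nilpotent of class at most $c$. A standard computation with the bilinear commutator maps $\gamma_1(K)/\gamma_2(K)\times\gamma_{i-1}(K)/\gamma_i(K)\to\gamma_i(K)/\gamma_{i+1}(K)$ shows that $\exp(\gamma_i(K)/\gamma_{i+1}(K))\mid\exp(K/K')$ for every $i$; iterating through the at most $c$ nontrivial factors yields $\exp(K)\mid\exp(K/K')^{\,c}$, which is $(c,m)$-bounded. Finally, each factor $\gamma_i(K)/\gamma_{i+1}(K)$ is generated by the basic commutators of weight $i$ in a fixed generating set of $K$ of size $d$, whose number is bounded in terms of $c$ and $d$; being abelian of exponent dividing $\exp(K/K')$, each such factor has $(c,m)$-bounded order, and therefore so does $|K|=\prod_{i=1}^{c}|\gamma_i(K)/\gamma_{i+1}(K)|$. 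This gives the required $(c,m)$-bound on $|[P,\alpha]|$.

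The only genuinely delicate point is the covering claim for $K/K'$ above; the remaining material is routine nilpotent group theory, and I would keep it short, at worst invoking the finiteness of the relatively free nilpotent group of bounded class, exponent and rank to conclude directly from the bounds on $c$, $p$, $d$ and $\exp(K)$.
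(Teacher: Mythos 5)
Your proof is correct and follows essentially the same route as the paper: both pass to the abelianization of $[P,\alpha]$, use coprimality of $\alpha$ (surjectivity of the commutator map on the abelian quotient) to show that the covered commutators fill that quotient, apply Lemma \ref{pri} to bound its order, and then invoke the standard fact that a nilpotent group of class $c$ with bounded abelianization has $(c,m)$-bounded order. The only cosmetic difference is that the paper organizes this as an induction on $c$ whose base case is exactly the abelian argument you carry out in module-theoretic language.
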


\begin{proof}
By Lemma \ref{Lemmapre}(1) we have $P=[P,\alpha]=[P,\underbrace{\alpha,\dots,\alpha}_{j}]$.  We argue by induction on the nilpotency class $c$. If $c=1$, then  $P$ is abelian and it consists of elements of the form $[x,\underbrace{\alpha,\dots,\alpha}_{j}]$.  It follows that $P$ can be covered by $m$ cyclic subgroups and by Lemma \ref{pri} the order of $P$ is $m$-bounded.

Assume $c\geq 2$ and pass to the quotient $\overline{P}=P/P'$.  Of course $\overline{P}$ is not cyclic and  abelian.  Hence  by the argument in the previous paragraph the order of $\overline{P}$ is $m$-bounded and  since $P$ is nilpotent of class $c$, it follows that  $|P|$ is $(c,m)$-bounded, as desired. 
\end{proof}

\begin{lemma}
\label{bibi} Let $A$ be a noncyclic $p'$-group of automorphisms of a noncyclic abelian $p$-group $G$. Then there exists $a\in A$ such that $[G,a]$ is noncyclic.
\end{lemma}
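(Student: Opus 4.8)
The plan is to reduce to the case where $G$ is elementary abelian and then argue with elementary linear algebra over $\Bbb F_p$. First I would replace $G$ by its characteristic subgroup $\overline G=\Omega_1(G)$. Since $G$ is a noncyclic abelian $p$-group, $\overline G$ is a noncyclic elementary abelian $p$-group. The kernel of the induced action $A\to\mathrm{Aut}(\overline G)$ consists of those automorphisms acting trivially on $\Omega_1(G)$; by Lemma \ref{Lemmapre}(3) any such automorphism acts trivially on the whole of $G$ and is therefore trivial, the action of $A$ on $G$ being faithful. Hence $A$ acts faithfully on $\overline G$ and remains a noncyclic $p'$-group. Finally $[\overline G,a]\le[G,a]$ for every $a\in A$, so a noncyclic $[\overline G,a]$ cannot sit inside a cyclic $[G,a]$; producing an $a$ with $[\overline G,a]$ noncyclic thus yields $[G,a]$ noncyclic, and it is enough to treat the elementary abelian case.

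Then I would view the elementary abelian group $G$ as a vector space over $\Bbb F_p$, identifying $A$ with a subgroup of $GL(G)$, so that for $a\in A$ the subgroup $[G,a]$ is exactly the image of the linear map $a-1$ and is cyclic if and only if $\mathrm{rank}(a-1)\le 1$. I would argue by contradiction, assuming that $[G,a]$ is cyclic for every $a\in A$; note $\dim_{\Bbb F_p}G\ge 2$ because $G$ is noncyclic. Since $A$ is a $p'$-group, each $a\in A$ has order prime to $p$ and is therefore semisimple, i.e. diagonalizable over the algebraic closure of $\Bbb F_p$. For a nontrivial $a$ the equality $\mathrm{rank}(a-1)=1$ forces the eigenvalue $1$ to occur with multiplicity $\dim G-1$, leaving a single further eigenvalue $\lambda_a$, and $\lambda_a\ne 1$ since otherwise $a$ would be a semisimple element with all eigenvalues $1$, hence the identity. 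Comparing with the characteristic polynomial, whose coefficients lie in $\Bbb F_p$, this gives $\lambda_a=\det a\in\Bbb F_p^{*}$.

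The crux would then be immediate: the determinant is a homomorphism $\det\colon A\to\Bbb F_p^{*}$ with trivial kernel, because $\det a=1$ for some $a\ne 1$ would force $\lambda_a=1$, a contradiction. Hence $A$ would embed in the cyclic group $\Bbb F_p^{*}$ and be cyclic, contrary to hypothesis, and this contradiction produces the desired $a$ with $[G,a]$ noncyclic. I expect the genuinely delicate point to be the reduction in the first paragraph: one must check that passing to $\Omega_1(G)$ keeps $G$ noncyclic and, crucially, keeps the action faithful, which is exactly where coprimeness enters through Lemma \ref{Lemmapre}(3). The linear-algebraic core is short—the one real idea is that a coprime automorphism with one-dimensional commutator subgroup is a homology detected by its determinant—and it is the semisimplicity supplied by the $p'$-hypothesis that rules out unipotent transvections (which would have trivial determinant) and so makes $\det$ injective.
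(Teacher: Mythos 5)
Your proof is correct, and it takes a genuinely different route from the paper's. The paper argues by contradiction via a case analysis on the structure of $A$: first $A$ abelian (a nontrivial $a_1$ and a nontrivial $a_2\in C_A([G,a_1])$ yield the noncyclic subgroup $[G,a_1][G,a_2]\leq[G,a_1a_2]$); then $A$ nilpotent (if every abelian subgroup of $A$ is cyclic, then $A$ is generalized quaternion times odd cyclic, Lemma \ref{cycliccom} gives $[G,a]=[G,a_0]$ for the unique involution $a_0$, and Lemma \ref{autocyclic}(2) forces $A$ cyclic); finally the general case (all nilpotent subgroups cyclic makes $A$ soluble, and a Fitting-subgroup argument produces a nonabelian group acting on a cyclic $p$-group, contradicting Lemma \ref{autocyclic}(1)). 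Your argument instead reduces to $\Omega_1(G)$ --- with faithfulness of the induced action supplied by Lemma \ref{Lemmapre}(3), exactly as you say --- and then is pure linear algebra: since $p'$-elements are semisimple, a nontrivial $a$ with $\mathrm{rank}(a-1)\leq 1$ has characteristic polynomial $(x-1)^{n-1}(x-\lambda_a)$ with $\lambda_a=\det a\in\Bbb F_p^{*}$ and $\lambda_a\neq 1$, so $\det$ is an injective homomorphism of $A$ into the cyclic group $\Bbb F_p^{*}$, contradicting noncyclicity of $A$. Every step checks out: rank is unchanged under field extension, and diagonalizability equates geometric and algebraic multiplicities, which is precisely where coprimality excludes transvections, as you observe. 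What your approach buys: it is shorter, self-contained, avoids the classification of groups whose abelian subgroups are all cyclic as well as the solubility/Fitting machinery, and it yields slightly more --- under the contradiction hypothesis $A$ embeds in $\Bbb F_p^{*}$, hence is cyclic of order dividing $p-1$. What the paper's approach buys: it stays entirely within the coprime-action toolkit (Lemmas \ref{Lemmapre}, \ref{cycliccom}, \ref{autocyclic}) already set up and reused throughout the rest of the paper.
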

\begin{proof} Suppose that the lemma is false and $[G,a]$ is cyclic for every $a$ in $A$. 

Firstly we consider the case where $A$ is abelian. Choose a nontrivial element $a_1\in A$. The cyclic subgroup $[G,a_1]$ is $A$-invariant and, by Lemma \ref{autocyclic}, the quotient $A/C_A([G,a_1])$ is cyclic. In particular $C_A([G,a_1])\neq1$ so we choose a nontrivial element $a_2\in C_A([G,a_1])$. Since $a_2$ centralizes $[G,a_1]$, it follows that $[G,a_1][G,a_2]$ is not cyclic. Moreover, it is clear that $a_1$ centralizes $[G,a_2]$. Hence, $[G,a_1][G,a_2]\leq[G,a_1a_2]$ and this is a contradiction. Thus, in the case where $A$ is abelian the result follows.

Suppose now that $A$ is nilpotent. If $A$ contains  a noncyclic abelian subgroup, then the result follows from the previous paragraph. Hence, without loss of generality, we suppose that every abelian subgroup of $A$ is cyclic. It follows (see for example \cite[Theorem 4.10(ii), p.\ 199]{Gorenstein}) that $A$ is isomorphic to $Q\times C$, where $Q$ is the generalized quaternion group and $C$ is a cyclic group of odd order. By Lemma \ref{cycliccom} for any $a$ in $A$ and any integer $i$ such that $a^{i}\neq 1$ we have $[G,a]=[G,a^{i}]$. Let $a_{0}$ be the unique involution of $A$. It is clear that $a_0$ is contained in all maximal cyclic subgroups of A. It follows that $[G,a]=[G,a_{0}]$ for all $a$ in $A$. Hence we conclude that $[G,A]=[G,a_{0}]$ which is cyclic. By Lemma \ref{Lemmapre} $A$ acts faithfully on $[G,a_0]$ and, in view of Lemma \ref{autocyclic}(2), the group $A$ must be cyclic. This is a contradiction.

Finally we can drop the assumption that $A$ is nilpotent. If $A$ contains at least one  noncyclic nilpotent subgroup, we use the previous case.  Thus,  we assume that all nilpotent subgroups in $A$ are cyclic and in this case $A$ is soluble. Let $F=F(A)$ be the Fitting subgroup of $A$. Of course we can assume that $A$ is not nilpotent and so we can choose a subgroup $Q$ of $F$ of prime order $q$ such that $Q$ is not contained in $Z(A)$. Then there exists a $q'$-element $a$ in $A$ such that $[Q,a]=Q$. The element $a$ acts on $[G,Q]$, which is a cyclic $p$-group. Thus $Q\langle a\rangle$ acts on $[G,Q]$, but this leads to a contradiction since by Lemma \ref{autocyclic}(1)  the group of automorphisms of a cyclic group is abelian. 
\end{proof}


\section{Theorem \ref{deltathm}}
Turull introduced in \cite{tur} the concept of an irreducible $B$-tower and showed that a  soluble group $G$ has Fitting height $h$ if and only if $h$ is maximal such that there exists an irreducible tower of height $h$ consisting of subgroups of $G$ (see Lemmas 1.4 and 1.9(3) in \cite{tur}). We will now remind the reader some of the properties of subgroups forming an irreducible  tower (we require only the case $B=1$ and refer to these objects simply as ``towers'').

Let $P_i$, where $i=1,\dots,h$ be subgroups of $G$ forming a tower of height $h$. Then we have

(1) $P_i$ is a $p_i$-group ($p_i$ a prime) for $i=1,\dots,h$.

(2) $P_i$ normalizes $P_j$ for $i<j$.

(3) $p_i\neq p_{i+1}$ for $i=1,\dots,h-1$.

(4) $[P_i,P_{i-1}]=P_i$ for $i=2,\dots,h$.

(5)  Let $\bar{P}_i=P_i/C_{P_i}(\bar{P}_{i+1})$ for $i=1,\dots,h-1$ and $\bar{P_{h}}=P_{h}$. Then $\phi(\phi(\bar{P}_i))=1$, $\phi(\bar{P}_i)\leq Z(\bar{P}_i)$. Moreover $P_{i-1}$ centralizes $\phi(\bar{P}_i)$ for $i=2,\dots,h$. Here $\phi$ denotes the Frattini subgroup.

In the next few lemmas we will assume that $\delta_{j+1}^{*}(G)=1$. Therefore $\delta_{j}^{*}(G)$ is nilpotent  and so any Sylow subgroups of $\delta_{j}^{*}(G)$ is normal in $G$.

\begin{lemma}\label{mumu} Let $p$ be a prime, $j$ a positive integer and $G$ a group  such that $\delta_{j+1}^*(G)=1$. Suppose that $\delta_j^*(G)$ is a nontrivial abelian $p$-group. Then  either there exists a $p'$-element $x$ which is a power of a $\delta_{j-1}^*$-commutator with the property that $[\delta_{j}^{*}(G),x]$ is noncyclic, or $\delta_{j}^{*}(G)$ is cyclic and $j=1$.
\end{lemma}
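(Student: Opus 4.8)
The plan is to split off the cyclic alternative, which is soft, and then to produce the required element in the noncyclic case; the whole difficulty lies in the range $j\ge 2$.

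First I would treat the case where $V:=\delta_j^*(G)$ is cyclic. Since $V$ is nontrivial and normal in $G$, Lemma~\ref{coradical} with $N=1$ forces $\delta_j^*(G)=1$ whenever $j\ge2$; hence $j=1$, and the second alternative holds. So from now on assume $V$ is noncyclic; the task is to find a $p'$-element $x$ that is a power of a $\delta_{j-1}^*$-commutator (I will call such an $x$ \emph{good}) with $[V,x]$ noncyclic. Because $\delta_{j+1}^*(G)=1$, the group $G$ is soluble and $\delta_j^*(G)$ is nilpotent, so $V=\gamma_\infty(D)$ with $D:=\delta_{j-1}^*(G)$; as $V$ is abelian, $D$ is metanilpotent.

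Next I would set up a coprime action. Let $H$ be a Hall $p'$-subgroup of $D$; Lemma~\ref{metaHall} gives $V=[V,H]$. Because $V=\gamma_\infty(D)$ is a $p$-group and $D/V$ is nilpotent, $H\cong HV/V$ is a Hall $p'$-subgroup of the nilpotent group $D/V$ and is therefore itself nilpotent, so $H=\prod_{q\ne p}Q_q$ is the direct product of its Sylow subgroups, each $Q_q$ being generated by good elements by Lemma~\ref{foca}. In the case $j=1$ every $p'$-element is already good and the proof ends here: writing $\overline H=H/C_H(V)$, if $\overline H$ is noncyclic then Lemma~\ref{bibi} supplies $h\in H$ with $[V,h]$ noncyclic, while if $\overline H$ is cyclic then $V=[V,H]=[V,h]$ for any $h$ whose image generates $\overline H$; in either event $h$ is good.

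The obstacle appears for $j\ge2$, where \emph{good} is a genuine restriction. The trouble is that Lemma~\ref{bibi}, or the same lemma applied inside a single $Q_q$, only produces \emph{some} $p'$-element with noncyclic commutator, and this element need not be a power of a $\delta_{j-1}^*$-commutator: the ``diagonal'' configuration, in which the noncyclicity of $V$ is split across two distinct primes (so that each good single-prime generator acts with cyclic commutator while a product of them does not), is not excluded by Lemmas~\ref{bibi}--\ref{foca} alone. To rule it out I would exploit that for $j\ge2$ the subgroup $D=\gamma_\infty(\delta_{j-2}^*(G))$ is itself the last term of a lower central series, and pass to an irreducible Turull tower $P_1,\dots,P_{j+1}$ of height equal to the Fitting height $j+1$ of $G$. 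With $p_{j+1}=p$, the layer $P_j$ is a \emph{single} prime group whose elements are powers of $\delta_{j-1}^*$-commutators and which acts on a section $P_{j+1}\le V$ with $[P_{j+1},P_j]=P_{j+1}$; working inside one coprime layer removes the cross-prime splitting, and the irreducibility of the tower removes the within-prime splitting (a faithful irreducible representation of an abelian $q$-group has cyclic image, so a good generator then moves all of $P_{j+1}$), so that $[P_{j+1},x]$ is noncyclic for a good $x\in P_j$ as soon as $P_{j+1}$ is noncyclic. The main obstacle is therefore to arrange the tower so that its top layer $P_{j+1}$ is noncyclic whenever $V$ is — equivalently, to show that the noncyclicity of $\gamma_\infty(\delta_{j-1}^*(G))$ cannot disperse across several coprime layers — which is precisely what forces the existence of the desired good element.
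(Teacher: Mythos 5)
Your treatment of the cyclic alternative and of the case $j=1$ is correct and essentially identical to the paper's: Lemma \ref{coradical} forces $j=1$ when $\delta_j^*(G)$ is cyclic, and for $j=1$ the dichotomy on $H/C_H(V)$ combined with Lemma \ref{bibi} works because every $p'$-element is automatically ``good''. You have also correctly diagnosed the difficulty for $j\geq 2$: Lemma \ref{bibi} only produces \emph{some} $p'$-element with noncyclic commutator, and nothing guarantees it is a power of a $\delta_{j-1}^*$-commutator.

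For $j\geq 2$, however, your proposal stops exactly where the real proof has to begin, so there is a genuine gap. Two claims are left unestablished. First, the assertion that irreducibility of the Turull tower ``removes the within-prime splitting'' is unjustified: the layer acting on the top of the tower need not be abelian (in the paper's proof the hardest case is precisely when that layer is a nonabelian $2$-group), irreducibility in Turull's sense refers to the action of the whole lower part of the tower rather than of a single layer, and in any case faithfulness or irreducibility of the action of a group does not prevent an individual good generator from having cyclic, or even trivial, commutator on the top layer. Excluding that possibility is the actual content of the paper's argument: after using minimality of a counterexample to reduce to $G=P_0\cdots P_j$ with $P_j=P$, the paper takes the set $B$ of powers of $\delta_{j-1}^*$-commutators in $H=P_{j-1}$, assumes $[P,b]$ is cyclic for every $b\in B$, and derives a contradiction separately when $H$ has odd order (using that $\Omega_1([P,\langle b_1,b_2\rangle])$ has dimension at most two, that nilpotent odd-order subgroups of $GL(2,p)$ are abelian, Lemma \ref{foca}, and induction on $j$) and when $H$ is a $2$-group (using the tower properties $\phi(\bar{P}_i)\leq Z(\bar{P}_i)$ and $[\phi(\bar{P}_i),P_{i-1}]=1$, an element $[a,b]$ of order four inverted by $a$, and Lemmas \ref{autocyclic} and \ref{cycliccom}). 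None of this is present in, or replaceable by, your irreducibility remark. Second, what you call the ``main obstacle'' --- that the top layer of the tower is noncyclic whenever $V$ is --- is not resolved at all; your final sentence merely restates the goal as the thing that ``forces the existence of the desired good element''. In the paper this point is exactly what the minimal-counterexample reduction delivers, since minimality yields $P_j=P$, so the top layer \emph{is} the noncyclic group $P$ itself. As written, your proposal proves the lemma only for $j=1$.
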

\begin{proof} For simplicity denote $\delta_{j}^{*}(G)$ by $P$. Suppose first that $P$ is cyclic. If $j\geq 2$, then in view of Lemma \ref{coradical} we deduce that $P=1$, a contradiction. Hence, if $P$ is cyclic, we have $j=1$. Now assume that $P$ is noncyclic. 

Consider the case where $j=1$. We wish to show that there exists a $p'$-element $x\in G$ with the property that $[P,x]$ is noncyclic. Let $L$ be a Hall $p'$-subgroup in $G$ and suppose that $[P,x]$ is cyclic for every $x\in L$.  If $L/C_L(P)$ is not cyclic, we obtain a contradiction with Lemma \ref{bibi}. Therefore assume that $L/C_L(P)$ is cyclic. Let $a$ be an element of $L$ such that $\langle a,C_L(P)\rangle={L}$. We have $P=[P,L]=[P,a]$, which is again a contradiction  since $[P,a]$ is cyclic.

Hence  we  may assume that $j\geq 2$. Moreover we assume that $G$ is a counter-example with minimal possible order. Since $\delta_{j+1}^*(G)=1$, it follows that  $G$ is soluble and the Fitting height precisely $j+1$. By \cite{tur} $G$ possesses a tower  of height $j+1$, i.e., a subgroup $P_0\ldots P_{j-2}P_{j-1}P_{j}$, where $P_{j}\leq P$. Again $P_j$ is noncyclic and therefore, in view of minimality of $|G|$, we have $G=P_{0}\ldots P_{j-2}P_{j-1}P_{j}$ and $P_j=P$. 

By \cite[Lemma 2.5]{forum}, each subgroup $P_{i}$ of the tower is generated by $\delta_{i-1}^{*}$-commutators contained in $P_{i}$. Set $H=P_{j-1}$. We know that $P=[P,H]$. Let $B$ be the set of all elements of $H$ which can be written as powers of $\delta_{j-1}^*$-commutators and assume that $[P,b]$ is cyclic for any $b$ in $B$. First we consider the case where $H$ has odd order.

Let $b_1,b_2$ be  elements of $B$ and $B_0=\langle b_1,b_2\rangle$. We have $[P,B_0]=[P,b_1][P,b_2]$. Consider  now the subgroup  $\Omega_{1}([P,B_0])$. Obviously, $\Omega_{1}([P,B_0])$ can be viewed as a linear space of dimension at most two over the field with $p$ elements. It is well-known that the nilpotent subgroups of odd order of $GL(2,p)$ are abelian. Hence, we conclude that the derived group of $B_0$ centralizes $\Omega_{1}([P,B_{0}])$ and, by Lemma \ref{Lemmapre}(3), also centralizes $P$. Recall that $B_0$ is a subgroup generated by two arbitrarily chosen elements $b_1,b_2\in B$.  By Lemma \ref{foca} we  have  $H=\langle B\rangle$, and so  we conclude that $H'$ centralizes $P$. Let $\bar{G}=G/C_G(P)$. There is a natural action of $\bar{G}$ on $P$ and so we will view $\bar{G}$ as a group of automorphisms of $P$. We already know that $\bar{H}$ is abelian and it is clear that $\delta_j^*(\bar{G})=1$.

Suppose first that $\bar{H}$ is cyclic and choose an element $b\in B$ such that $\bar{H}$ is generated by $bC_G(P)$. We have  $P=[P,\bar{H}]=[P,b]$ which is cyclic, a contradiction.
Hence, $\bar{H}$ is not cyclic. Let $q$ be the prime such that $H$ has $q$-power order. By induction the group $\bar{G}$ contains a $q'$-element $y$ which is a power of $\delta_{j-2}^*$-commutator with the property that $[\bar{Q},y]$ is noncyclic. Moreover, Lemma \ref{copgen}(4) shows that $[\bar{Q},y]$ consists entirely of $\delta_{j-1}^*$-commutators. For any element $t\in[\bar{Q},y]$ we can choose $b_t\in B$ such that $[P,t]=[P,b_t]$. Therefore $[P,t]$ is cyclic for each $t\in[\bar{H},y]$. In view of Lemma \ref{bibi} this leads to a contradiction.

Now consider the case where $H$ is a $2$-subgroup. In this case the properties of towers listed before the lemma play an important role in our arguments. As before we have $[P,H]=P$ and we wish to show that $H$ contains a $\delta_{j-1}^*$-commutator $x$ with the property that $[P,x]$ is noncyclic. We can pass to the quotient $G/C_H(P)$ and assume that $H$ acts on $P$ faithfully. Choose a $\delta_{j-2}^{*}$-commutator $b\in P_{j-2}$. Suppose that $b$ normalizes an abelian subgroup $A$ in $H$. If $[A,b]\neq1$, then $[A,b]$ is a noncyclic abelian subgroup which, by Lemma \ref{copgen}(4), entirely consists of $\delta_{j-1}^{*}$-commutators. By Lemma \ref{bibi} $[P,x]$ is noncyclic for some $x\in [A,b]$ and we are done. Therefore $[A,b]=1$ for every abelian subgroup $A$ of $H$ which is normalized by $b$.

We know that $[h,\underbrace{b,\dots,b}_{j-1}]$ is a $\delta_{j-1}^{*}$-commutator for every $h\in H$. Therefore we can choose $a\in H$ such that $a$ and $[a,b]$ are both nontrivial $\delta_{j-1}^{*}$-commutators. If both $a$ and $[a,b]$ have order 2, then the subgroup $\langle a,[a,b]\rangle$ is abelian and consists of $\delta_{j-1}^{*}$-commutators. By Lemma \ref{bibi} $[P,x]$ is noncyclic for some $x\in\langle a,[a,b]\rangle$ and we are done. Therefore we can choose $a\in H$ such that $a$ and $[a,b]$ are both nontrivial $\delta_{j-1}^{*}$-commutators, the element $[a,b]$ being of order four. Since $a^{2}\in Z(H)$ and since $[Z(H),b]=1$, we have $[a^{2},b]=1$. So we have $$1=[a^{2},b]=[a,b][a,b]^{a}$$ 
and in particular  $a$ inverts $[a,b]$. It follows that $a$ normalizes $[P,[a,b]]$ which is a cyclic subgroup.  Now consider the action of the subgroup $D=\langle a,[a,b]\rangle$ on $[P,[a,b]]$. By Lemma \ref{autocyclic} $D'$ centralizes $[P,[a,b]]$. So in particular $[a,b]^{2}$ is nontrivial and it centralizes the cyclic subgroup $[P,[a,b]]$. Thus we get a contradiction by Lemma \ref{cycliccom}. The proof is now complete.   
\end{proof}

\begin{lemma}\label{mumba} Let $p$ be a prime, $j$ a positive integer and $G$ a group  such that $\delta_{j+1}^*(G)=1$. Let $P$ be the Sylow $p$-subgroup of $\delta_j^*(G)$ and assume that $[P,x]$ is cyclic for every $p'$-element $x$ which is a power of  a $\delta_{j-1}^*$-commutator. Then $P$ is cyclic.
\end{lemma}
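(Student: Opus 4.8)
The plan is to reduce the statement to the abelian case already settled in Lemma~\ref{mumu}. Since $\delta_{j+1}^*(G)=1$, the subgroup $\delta_j^*(G)$ is nilpotent, and so it decomposes as the direct product $P\times Q$ of its Sylow $p$-subgroup $P$ with its Hall $p'$-subgroup $Q$. Both $Q$ and $\Phi(P)$ are characteristic in $\delta_j^*(G)$ and hence normal in $G$, so $N=Q\Phi(P)$ is a normal subgroup of $G$. I would pass to the quotient $\bar G=G/N$. Using the standard fact that the image of a $\delta_k^*$-commutator is again a $\delta_k^*$-commutator together with the lifting remark recorded in the introduction (so that $\delta_k^*(G/N)=\delta_k^*(G)N/N$), one obtains $\delta_{j+1}^*(\bar G)=1$ and $\delta_j^*(\bar G)=\delta_j^*(G)N/N\cong P/\Phi(P)$, an elementary abelian $p$-group.

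Now I would argue by contradiction and suppose $P$ is noncyclic. Then $P/\Phi(P)$ is noncyclic, so $\delta_j^*(\bar G)$ is a nontrivial noncyclic abelian $p$-group. Applying Lemma~\ref{mumu} to $\bar G$, the alternative that $\delta_j^*(\bar G)$ is cyclic is excluded, and we obtain a $p'$-element $\bar x$ of $\bar G$, which is a power of a $\delta_{j-1}^*$-commutator, such that $[\delta_j^*(\bar G),\bar x]=[\bar P,\bar x]$ is noncyclic.

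The remaining, and most delicate, step is to lift $\bar x$ back to an element of $G$ that still witnesses a failure of the hypothesis. Write $\bar x=\bar y^{\,k}$ for some $\delta_{j-1}^*$-commutator $\bar y$ of $\bar G$; by the lifting remark choose a $\delta_{j-1}^*$-commutator $y$ of $G$ with $yN=\bar y$, and let $x$ be the $p'$-part of $y^{k}$. Then $x$ is a power of $y$, hence a power of a $\delta_{j-1}^*$-commutator, and it is a $p'$-element, so $(|x|,|P|)=1$ automatically and $x$ normalizes the normal subgroup $P$. Since $\bar x$ is already a $p'$-element its $p'$-part equals itself, whence the image of $x$ in $\bar G$ is exactly $\bar x$. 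Because commutator formation commutes with quotients, the image of $[P,x]$ in $\bar G$ equals $[\bar P,\bar x]$, which is noncyclic; therefore $[P,x]$ is itself noncyclic. This contradicts the hypothesis that $[P,x]$ is cyclic for every $p'$-element $x$ that is a power of a $\delta_{j-1}^*$-commutator, and the contradiction forces $P$ to be cyclic. I expect this lifting step to be the main obstacle: one must produce, from the modular datum $\bar x$, an honest $p'$-element of $G$ that is simultaneously a power of a $\delta_{j-1}^*$-commutator and whose commutator with $P$ remains noncyclic. Passing to the $p'$-part of a preimage resolves the coprimality without disturbing the image (precisely because $\bar x$ is already a $p'$-element), while the normality of $P$ and the compatibility of commutators with passage to quotients ensure that noncyclicity is preserved under the reduction.
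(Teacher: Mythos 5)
Your proof is correct, and it reaches the conclusion by a genuinely different reduction than the paper's, although both hinge on the same key lemma, Lemma \ref{mumu}. The paper first passes to $G/O_{p'}(\delta_j^*(G))$ so that $P=\delta_j^*(G)$, settles the abelian case by Lemma \ref{mumu}, and then disposes of nonabelian $P$ by induction on the nilpotency class: passing to $G/Z(P)$, concluding $P/Z(P)$ is cyclic, and using the fact that a group which is cyclic modulo its centre is abelian. You instead collapse the whole reduction into a single quotient by $N=Q\Phi(P)$, so that $\delta_j^*(G/N)\cong P/\Phi(P)$ is elementary abelian, and invoke the Burnside basis theorem ($P$ is cyclic if and only if $P/\Phi(P)$ is cyclic) to transfer noncyclicity back to $P$; this avoids the induction entirely and is, if anything, cleaner and in the spirit of the Frattini-quotient trick the paper itself uses in Lemma \ref{copgen}. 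The cost in either approach is identical: one must check that the hypothesis survives passage to the quotient, i.e.\ that the witness $\bar x$ produced by Lemma \ref{mumu} lifts to an honest $p'$-element of $G$ which is a power of a $\delta_{j-1}^*$-commutator and whose commutator with $P$ maps onto $[\bar P,\bar x]$. The paper leaves this transfer implicit, whereas you carry it out correctly and explicitly: lift a $\delta_{j-1}^*$-commutator via the remark from the introduction, take the appropriate power, and replace it by its $p'$-part, which is again a power of that commutator and has image exactly $\bar x$ precisely because $\bar x$ is already a $p'$-element; since $[P,x]$ then surjects onto the noncyclic group $[\bar P,\bar x]$, it is noncyclic, contradicting the hypothesis. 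So your argument is a valid, slightly more economical alternative to the paper's induction, with the added merit of making the lifting step precise.
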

\begin{proof} By passing to the quotient $G/O_{p'}(\delta_{j}^{*}(G))$ we may assume that $\delta_{j}^{*}(G)$ is a $p$-group and that $P=\delta_{j}^{*}(G)$.   If $P$ is abelian, the result is immediate from Lemma \ref{mumu}. Thus, we assume that $P$ is not abelian and use induction on the nilpotency class of $P$. We consider the quotient $G/Z(P)$ and by induction we conclude that $P/Z(P)$ is cyclic. However this implies that $P$ is abelian and we get  a contradiction.
\end{proof}

\begin{lemma}\label{murara} Let $p$ be a prime, $j$ a positive integer and $G$ a group such that $\delta_{j+1}^*(G)=1$. Suppose that $G$ possesses $m$ cyclic subgroups whose union contains all $\delta_j^*$-commutators of $G$ and  that the Sylow $p$-subgroup $P$ of $\delta_j^*(G)$ is nilpotent of class $c$. Let $x$ be a $p'$-element which is a power of $\delta_{j-1}^*$-commutator in $G$ such that $[P,x]$ is noncyclic. Then the order of $[P,x]$ is $(c,m)$-bounded.
\end{lemma}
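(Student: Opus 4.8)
The plan is to reduce the statement directly to Lemma~\ref{coctionbounded}, which already bounds $|[P,\alpha]|$ for a single coprime automorphism $\alpha$ of a $p$-group of class $c$, provided the Engel-type elements $[g,\underbrace{\alpha,\dots,\alpha}_{j}]$ (with $g\in P$) are covered by $m$ cyclic subgroups of $P$. The whole task is therefore to manufacture such an automorphism from $x$ and to show that the covering hypothesis of Lemma~\ref{coctionbounded} is inherited from the given covering of the $\delta_j^*$-commutators of $G$.

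First I would record that $P$ is normal in $G$. Since $\delta_{j+1}^*(G)=1$, the subgroup $\delta_j^*(G)$ is nilpotent (as noted before Lemma~\ref{mumu}), so its Sylow $p$-subgroup $P$ is characteristic in $\delta_j^*(G)$ and hence normal in $G$. Consequently the $p'$-element $x$ normalizes $P$ and, being of order prime to $|P|$, induces by conjugation a coprime automorphism $\alpha$ of $P$ with $[P,\alpha]=[P,x]$. In particular $[P,\alpha]$ is noncyclic, and the nilpotency class of $P$ is $c$, so $P$ and $\alpha$ satisfy the structural hypotheses of Lemma~\ref{coctionbounded}.

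The key step is to identify the relevant Engel-type elements as coprime commutators. For every $g\in P$ the element $[g,\underbrace{\alpha,\dots,\alpha}_{j}]$ equals $[g,\underbrace{x,\dots,x}_{j}]$, and since $x$ is a power of a $\delta_{j-1}^*$-commutator normalizing $P$ with $(|x|,|P|)=1$, Lemma~\ref{1113} (applied with $y_1=\dots=y_j=x$ and $N=P$) shows that this element is a $\delta_j^*$-commutator of $G$. Thus each such element lies in one of the $m$ given cyclic subgroups $C_1,\dots,C_m$ of $G$. Because all of these elements lie inside $P$ (indeed inside $[P,x]$), each lies in some $C_i\cap P$; and $C_i\cap P$ is cyclic, being a subgroup of the cyclic group $C_i$. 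Hence the (at most $m$) cyclic subgroups $C_1\cap P,\dots,C_m\cap P$ of $P$ cover all elements of the form $[g,\underbrace{\alpha,\dots,\alpha}_{j}]$ with $g\in P$, which is precisely the covering hypothesis of Lemma~\ref{coctionbounded}.

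With these checks in place, Lemma~\ref{coctionbounded} applies and yields that $|[P,\alpha]|=|[P,x]|$ is $(c,m)$-bounded, as required. I expect the only genuinely delicate point to be the bookkeeping in the key step: verifying that exactly $j$ iterated commutators of $x$ produce a $\delta_j^*$-commutator (matching the number of copies of $\alpha$ demanded by Lemma~\ref{coctionbounded}) and that the cyclic covering of $G$ correctly restricts to a cyclic covering of $P$. Once the elements $[g,\underbrace{\alpha,\dots,\alpha}_{j}]$ are certified as $\delta_j^*$-commutators, the bound is immediate.
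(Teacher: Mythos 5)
Your proposal is correct and follows essentially the same route as the paper: conjugation by $x$ induces a coprime automorphism of $P$, the Engel-type elements $[g,\underbrace{x,\dots,x}_{j}]$ are $\delta_j^*$-commutators (via Lemma~\ref{1113}) and hence covered by the given cyclic subgroups, so Lemma~\ref{coctionbounded} applies. Your write-up merely makes explicit some points the paper leaves implicit (normality of $P$ in $G$, and that the covering restricts to the cyclic subgroups $C_i\cap P$ of $P$), which is fine.
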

\begin{proof}  The conjugation by the element $x$ induces a $p'$-automorphism  of $P$. Since every element of the form $[y,\underbrace{x,\dots,x}_{j}]$, with $y\in P$ is a $\delta_{j}^{*}$-commutator, Lemma \ref{coctionbounded} shows that the order of $[P,x]$ is $(c,m)$-bounded, as desired.
\end{proof}

\begin{lemma}\label{blibli} Let $j$ be a non-negative integer and $G$ a group such that $\delta_j^*(G)$ is nilpotent of class $c$. Suppose that $G$ possesses $m$ cyclic subgroups whose union contains all $\delta_j^*$-commutators of $G$.  Then $\delta_{j}^{*}(G)$ contains a subgroup $\Delta$ of $(c,m)$-bounded order which is normal in $G$ and has the property that $\delta_{j}^{*}(G)/\Delta$ is cyclic. If $j\geq 2$, then $\delta_{j}^{*}(G)=\Delta$.
\end{lemma}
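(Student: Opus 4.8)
The plan is to exploit the nilpotency of $\delta_j^*(G)$ in order to work one prime at a time. Since $\delta_j^*(G)$ is nilpotent we have $\delta_{j+1}^*(G)=\gamma_\infty(\delta_j^*(G))=1$, so every lemma of Section 3 is available, and $\delta_j^*(G)$ is the direct product of its Sylow subgroups $P_p$, each of which is characteristic in $\delta_j^*(G)$, hence normal in $G$, and of class at most $c$. The degenerate case $j=0$ (where $\delta_{j-1}^*$ is not defined) I would dispose of directly: here $\delta_0^*(G)=G$ and every element is a $\delta_0^*$-commutator, so each noncyclic $P_p$ is covered by $m$ cyclic subgroups and Lemma \ref{pri} bounds $|P_p|$; moreover a noncyclic $p$-group covered by $m$ cyclic subgroups forces $p\le m-1$, so taking $\Delta$ to be the product of the noncyclic $P_p$ settles this case. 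So assume $j\ge 1$. For each prime $p$ let $\mathcal X_p$ be the set of $p'$-elements of $G$ that are powers of $\delta_{j-1}^*$-commutators, and set
$$\Delta_p=\langle\,[P_p,x]\ :\ x\in\mathcal X_p,\ [P_p,x]\ \text{noncyclic}\,\rangle .$$
Since being a $\delta_{j-1}^*$-commutator, being a $p'$-element, and having noncyclic commutator $[P_p,x]$ are all preserved under conjugation, $\Delta_p$ is normal in $G$. Finally I would take $\Delta=\prod_p\Delta_p$.

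The first main step is to show that $P_p/\Delta_p$ is cyclic. I would pass to $\tilde G=G/\Delta_p$, in which $\tilde P_p=P_p/\Delta_p$ is the Sylow $p$-subgroup of $\delta_j^*(\tilde G)$ and $\delta_{j+1}^*(\tilde G)=1$, and verify the hypothesis of Lemma \ref{mumba}: for every $p'$-element $\tilde x$ of $\tilde G$ that is a power of a $\delta_{j-1}^*$-commutator, $[\tilde P_p,\tilde x]$ is cyclic. By the lifting remark of the Introduction such an $\tilde x$ is the image of some $x\in\mathcal X_p$ (lift and pass to the $p'$-part). If $[P_p,x]$ is noncyclic then $[P_p,x]\le\Delta_p$ and so $[\tilde P_p,\tilde x]=1$; if $[P_p,x]$ is cyclic then $[\tilde P_p,\tilde x]$ is a homomorphic image of it, hence cyclic. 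Thus Lemma \ref{mumba} yields that $\tilde P_p=P_p/\Delta_p$ is cyclic, and consequently $\delta_j^*(G)/\Delta=\prod_p P_p/\Delta_p$ is a direct product of cyclic groups of pairwise coprime orders, hence cyclic.

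The second main step, which I expect to be the hard part, is to bound $|\Delta|$; two things must be controlled, namely how many primes $p$ have $\Delta_p\ne 1$, and the order of each such $\Delta_p$. First, $\Delta_p\ne 1$ precisely when $P_p$ is noncyclic (by the definition of $\Delta_p$ together with the contrapositive of Lemma \ref{mumba}). If $P_p$ is noncyclic I would quotient out the product of the Sylow $q$-subgroups of $\delta_j^*(G)$ with $q\ne p$ and then $\Phi(P_p)$, reducing to a group whose $\delta_j^*$-subgroup is the nontrivial noncyclic abelian $p$-group $P_p/\Phi(P_p)$; Lemma \ref{mumu} then produces a power $x$ of a $\delta_{j-1}^*$-commutator with $[P_p/\Phi(P_p),x]$ noncyclic, and by Lemma \ref{copgen}(4) every element of this subgroup is a $\delta_j^*$-commutator. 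Its subgroup $\Omega_1$ is elementary abelian of rank at least two and consists of $\delta_j^*$-commutators, so at least $p+1$ of the covering cyclic subgroups are needed to cover it; hence $p\le m-1$ and there are at most $m$ bad primes. To bound $|\Delta_p|$ for a bad prime I would argue as follows. By Lemma \ref{copgen}(3) the subgroup $\Delta_p$ is generated by $\delta_j^*$-commutators, so in the elementary abelian quotient $\Delta_p/\Phi(\Delta_p)$ (here $\Phi(\Delta_p)$ is normal in $G$) the images of these commutators span the whole space while lying in the union of the $m$ cyclic subgroups of the covering; as each of those meets $\Delta_p/\Phi(\Delta_p)$ in a subspace of dimension at most one, the span has dimension at most $m$, that is, $\Delta_p$ is generated by at most $m$ elements. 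On the other hand Lemma \ref{murara} bounds the order of each generating subgroup $[P_p,x]$ by a $(c,m)$-bounded number, so these subgroups have $(c,m)$-bounded exponent; since $\Delta_p$ is nilpotent of class at most $c$ and generated by elements of $(c,m)$-bounded order, its exponent is $(c,m)$-bounded as well. An $m$-generated nilpotent $p$-group of class at most $c$ and of $(c,m)$-bounded exponent has $(c,m)$-bounded order, whence $|\Delta_p|$, and therefore $|\Delta|=\prod_p|\Delta_p|$, is $(c,m)$-bounded.

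This produces the required subgroup: $\Delta$ is normal in $G$, has $(c,m)$-bounded order, and $\delta_j^*(G)/\Delta$ is cyclic. Finally, when $j\ge 2$, Lemma \ref{coradical} applied with $N=\Delta$ forces $\delta_j^*(G)=\Delta$, giving the last assertion. The delicate points I would have to check carefully are the behaviour of $\delta_j^*(G)$ and of $\delta_{j-1}^*$-commutators under the quotient maps $G\to G/\Delta_p$ and $G\to G/\Phi(P_p)$ (so that Lemmas \ref{mumu} and \ref{mumba} genuinely apply to the Sylow $p$-subgroup of the relevant $\delta_j^*$), together with the elementary but essential fact that a finitely generated nilpotent group of bounded class and bounded exponent has bounded order.
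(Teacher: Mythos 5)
Your proposal is correct and follows essentially the same route as the paper: the same subgroups $\Delta_p$ generated by the noncyclic $[P_p,x]$, the same order bound via Lemma \ref{murara} combined with $m$-generation coming from the covering and the class-$c$ exponent argument, cyclicity of the quotient via Lemma \ref{mumba}, and Lemma \ref{coradical} for $j\geq 2$. The only divergence is a minor sub-step: to limit the number of primes with $\Delta_p\neq 1$ you invoke Lemma \ref{mumu} and a covering count forcing $p\leq m-1$, whereas the paper simply observes that the bound on $|\Delta_p|$ is independent of $p$, so that $\Delta_p=1$ for all sufficiently large primes.
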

\begin{proof} We argue by induction on $j$. Suppose first that $j=0$. In this case $G$ is nilpotent of class $c$ and it is covered by $m$ cyclic subgroups. The result is rather straightforward applying Lemma \ref{pri} to each Sylow subgroup of $G$.

So we assume that $j\geq1$. Let $P$ be a Sylow $p$-subgroup of $\delta_j^*(G)$ for some prime $p$. Denote by $\Delta_p$ the subgroup generated by all subgroups of the form $[P,y]$, where $y$ ranges over the set of all $p'$-elements which are powers of $\delta_{j-1}^*$-commutators in $G$ such that $[P,y]$ is noncyclic. By Lemma \ref{murara} the orders of all such subgroups $[P,y]$ have a common bound, which depends only on $c$ and $m$. We observe that $\Delta_p$ is a group which is nilpotent of class at most $c$ and is generated by elements of $(c,m)$-bounded order. Hence the exponent of $\Delta_p$ is $(c,m)$-bounded. Moreover, by Lemma \ref{copgen}(3) $\Delta_p$ is generated by $\delta_{j}^*$-commutators that are all contained in $m$ cyclic subgroups, and so we conclude that $\Delta_p$ has at most $m$ generators. It follows that the order of $\Delta_p$ is $(c,m)$-bounded. We further observe that since the bound on the order of $\Delta_p$ does not depend on $p$, it follows that $\Delta_p=1$ for all primes $p$ which are bigger than certain number depending only on $c$ and $m$. 

Let $\Delta$ be the product of the subgroups $\Delta_p$ over all prime divisors of $|\delta_j^*(G)|$. It is clear that $|\Delta|$ is  $(c,m)$-bounded. Consider the quotient $G/\Delta$. For simplicity, we just assume that $\Delta=1$. Then $[P,x]$ is cyclic for every $p'$-element $x$ which is a power of  a $\delta_{j-1}^*$-commutator. Then, by Lemma \ref{mumba}, $P$ is cyclic. Thus all Sylow subgroups of $\delta_{j}^{*}(G)$ are cyclic. It follows that $\delta_{j}^{*}(G)/\Delta$ is cyclic. Of course, if $j\geq 2$, then by Lemma \ref{coradical} we have  $\delta_{j}^{*}(G)=\Delta$.
\end{proof}

We are now ready to complete the  proof of Theorem \ref{deltathm}.

\begin{proof} Recall that $j\geq2$ and $G$ possesses $m$ cyclic subgroups whose union contains all $\delta_j^*$-commutators of $G$. We wish to show that the order of $\delta_j^*(G)$ is $m$-bounded. Let $C_1,\dots,C_m$ be the cyclic subgroups whose union contains all $\delta_j^*$-commutators of $G$. Without loss of generality we assume that each subgroup $C_i$ is generated by $\delta_j^*$-commutators (not necessarily by a single $\delta_j^*$-commutator). Thus, $\delta_j^*(G)=\langle C_1,\dots,C_m\rangle$ and in particular it follows that $\delta_j^*(G)$ can be generated by $m$ elements.  Let $x\in G$ be a $\delta_j^*$-commutator. For any $g\in G$ the conjugate $x^g$ is again a $\delta_j^*$-commutator and so $x^g\in C_i$ for some $i$. Since $C_i$ is cyclic, it contains only at most one subgroup of any given order and we conclude that the cyclic subgroup $\langle x\rangle$ has at most $m$ conjugates. Therefore the index of the normalizer of $\langle x\rangle$ in $G$ is at most $m$. Let $N$ be the intersection of all normalizers of cyclic subgroups generated by a $\delta_j^*$-commutator and  set $K=\delta_j^*(G)\cap N$. Since $\delta_j^*(G)$ is $m$-generated, it follows that the number of subgroups of $\delta_j^*(G)$ whose index is at most $m$ is $m$-bounded \cite[Theorem 7.2.9]{mhall} and so we deduce that the index of $K$ in $\delta_j^*(G)$ is $m$-bounded as well. It is clear that $K$ normalizes each of the subgroups $C_1,\dots,C_m$. This implies that $K$ is nilpotent of class at most 2. Indeed, since $Aut\, C_i$ is abelian for every $i=1,\ldots,m$, we deduce that $K/C_K(C_i)$ is abelian. So $K'$ centralizes $\delta_j^*(G)$ and therefore $K'\leq Z(K)$. 

Recall that given a group $G$, the last term of the upper central series of $G$ is called the hypercenter of $G$. It will be denoted by $Z_{\infty}(G)$.
Let us show that $K\leq Z_{\infty}(\delta_j^*(G))$.  Choose a Sylow $p$-subgroup $P$ of $K$. It is clear that $P$ is normal in $G$. If all the subgroups $C_i$ have $p$-power order, then all $\delta_{j}^{*}$-commutators of $G$ are $p$-elements and by \cite[Theorem 2.4]{forum} $G$ is soluble and $\delta_j^*(G)$ is a $p$-subgroup. Thus $\delta_j^*(G)$ is nilpotent and so,  we have $Z_{\infty}(\delta_{j}^{*}(G))=\delta_j^*(G)$ and the inclusion $P\leq Z_{\infty}(\delta_{j}^{*}(G))$ is clear.  Otherwise, choose a $p'$-element $x\in C_i$ for some $i$ which is a power of a $\delta_j^*$-commutator. Since $P$ normalizes $\langle x\rangle$, it follows that $x$ centralizes $P$. Therefore $\delta_j^*(G)/C_{\delta_j^*(G)}(P)$ is  a $p$-group and again the inclusion $P\leq Z_{\infty}(\delta_j^*(G))$ follows. Thus, $P\leq Z_{\infty}(\delta_j^*(G))$ for every prime $p$ and hence indeed $K\leq Z_{\infty}(\delta_j^*(G))$.

Therefore the index of $Z_{\infty}(\delta_j^*(G))$ in $\delta_j^*(G)$ is $m$-bounded. Thus, by Baer's Theorem \cite[Corollary 2, p.\ 113]{Robinson}, $\gamma_\infty(\delta_j^*(G))$ has $m$-bounded order. Passing to the quotient $G/\gamma_\infty(\delta_j^*(G))$ we can assume that $\delta_j^*(G)$ is nilpotent.  Hence $\delta_{j}^{*}(G)$ is the direct  product of its Sylow subgroups. It is sufficient to show that any Sylow subgroup of $\delta_{j}^{*}(G)$ has bounded order. Let us choose $p$ a prime that divides $|\delta_{j}^{*}(G)|$ and pass to the quotient $G/O_{p'}(\delta_j^*(G))$. So we assume that $\delta_j^*(G)$ is a $p$-group. In view of Lemma \ref{blibli} it is now sufficient to bound the nilpotency class of $\delta_j^*(G)$. It has already been mentioned that $K'$ centralizes $\delta_j^*(G)$ and therefore we can pass to the quotient $G/K'$ and, without loss of generality, assume that $K$ is abelian. Choose generators $x_1,\dots,x_m$ of the subgroups $C_1,\dots,C_m$ and let $t$ be the index of $K$ in $\delta_j^*(G)$. Since each subgroup $K\langle x_i\rangle$ is nilpotent of class at most 2 and since ${x_i}^t\in K$, it follows that $K^t$ centralizes $x_i$ for each $i=1,\dots,m$. In other words $K^t\leq Z(\delta_j^*(G))$. Passing again to the quotient $G/Z(\delta_j^*(G))$ we can assume that $K^t=1$. Since the index $t$ of $K$ in $\delta_j^*(G)$ is $m$-bounded and since $\delta_j^*(G)$ can be generated by $m$ elements, we conclude that the minimal number of generators for $K$ is $m$-bounded. Combining this with the fact that $K^t=1$, we immediately deduce that the order of $K$ and therefore that of $\delta_j^*(G)$ are $m$-bounded. Of course, this implies that so is the nilpotency class of $\delta_j^*(G)$. 
The proof is complete.
\end{proof}

\section{Theorem \ref{gammathm}}

In this  section we will deal with Theorem \ref{gammathm}. The proof  is similar to that of  Theorem \ref{deltathm} but in fact it is easier. Therefore we will not give a detailed proof here but rather describe only some steps. 

The next lemma is similar to Lemma  \ref{mumu}.
\begin{lemma}\label{mumugamma} Let $p$ be a prime and $G$ a  metanilpotent  group. Suppose that the Sylow $p$-subgroup $P$ of $\gamma_2^*(G)$ is abelian and noncyclic. Then there exists a $p'$-element $x$ with the property that $[P,x]$ is noncyclic.
\end{lemma}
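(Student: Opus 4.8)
The plan is to follow closely the argument used for the case $j=1$ of Lemma~\ref{mumu}, with the metanilpotency hypothesis here playing the role that solubility played there. Since $G$ is metanilpotent, its nilpotent residual $\gamma_\infty(G)=\gamma_2^*(G)$ is nilpotent, so its Sylow $p$-subgroup $P$ is characteristic in $\gamma_\infty(G)$ and hence normal in $G$; in particular a Hall $p'$-subgroup acts on $P$ by conjugation. Fix such a Hall $p'$-subgroup $L$ of $G$. The crucial structural input will be Lemma~\ref{metaHall}, which yields $P=[P,L]$; this is exactly the identity that, in the soluble setting of Lemma~\ref{mumu}, appeared as $P=[P,a]$ and drove the contradiction.

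First I would argue by contradiction, assuming that $[P,x]$ is cyclic for every $p'$-element $x$, in particular for every $x\in L$. Viewing the conjugation action of $L$ on $P$, set $A=L/C_L(P)$, a $p'$-group acting faithfully on the noncyclic abelian $p$-group $P$. I would then split into two cases according to whether $A$ is cyclic. If $A$ is noncyclic, Lemma~\ref{bibi} produces an element of $A$, hence a $p'$-element $x\in L$, with $[P,x]$ noncyclic, contradicting the assumption. If $A$ is cyclic, choose $a\in L$ whose image generates $A$, so that $L=\langle a\rangle C_L(P)$; using that $P$ is abelian (so that $[P,\langle a\rangle]=[P,a]$) and that $C_L(P)$ acts trivially, one obtains $P=[P,L]=[P,a]$, and since $[P,a]$ is cyclic while $P$ is not, this is again a contradiction.

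I do not expect a serious obstacle here: once the right inputs are in place the argument is essentially routine. The only points requiring a little care are the reduction $[P,L]=[P,a]$ in the cyclic case (which relies on $P$ being abelian, so that $[P,HK]=[P,H][P,K]$ and the centralizing factor $C_L(P)$ drops out) and the verification that the faithful coprime action of the noncyclic $p'$-group $A$ on the noncyclic abelian $p$-group $P$ meets exactly the hypotheses of Lemma~\ref{bibi}. The genuinely substantive ingredient is Lemma~\ref{metaHall}, which is what allows metanilpotency to substitute for the full tower machinery invoked in the higher cases of Lemma~\ref{mumu}.
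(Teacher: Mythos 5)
Your proof is correct and follows essentially the same route as the paper: both invoke Lemma \ref{metaHall} to get a Hall $p'$-subgroup $H$ with $P=[P,H]$, consider the faithful action of $H/C_H(P)$ on $P$, apply Lemma \ref{bibi} when that quotient is noncyclic, and use a generator's preimage $a$ with $P=[P,a]$ when it is cyclic. The only difference is cosmetic (you phrase it as a contradiction, the paper argues directly), so nothing further is needed.
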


\begin{proof}
 By Lemma \ref{metaHall} there is a Hall $p'$-subgroup  $H$ of $G$ such that $P=[P,H]$.  Now we consider the quotient $H/C_{H}(P)$ with acts faithfully on $P$. If $H/C_{H}(P)$ is noncyclic, then by Lemma \ref{bibi} there exists an element $x$ in $H$ such that $[P,x]$ is noncyclic. Therefore we assume that $H/C_{H}(P)$ is cyclic and let $x$ be an element in $H$ such that $xC_{H}(P)$ generates $H/C_{H}(P)$. Then $P=[P,x]$ is noncyclic and $x$ is the required element.
\end{proof}

The proof of the next lemma follows word-by-word that of Lemma \ref{mumba}. Therefore we omit the details.  
\begin{lemma}\label{mumbagamma} Let $p$ be a prime and $G$ a  metanilpotent  group. Let $P$ be the Sylow $p$-subgroup of $\gamma_2^*(G)$ and assume that $[P,x]$ is cyclic for every $p'$-element $x$. Then $P$ is cyclic.
\end{lemma}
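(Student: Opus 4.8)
The plan is to transcribe the argument of Lemma \ref{mumba}, with Lemma \ref{mumugamma} taking over the role that Lemma \ref{mumu} played there; the metanilpotency of $G$ is exactly what makes this possible. Since $G$ is metanilpotent, $\gamma_2^*(G)=\gamma_\infty(G)$ is nilpotent, so its Sylow subgroups are characteristic in $\gamma_2^*(G)$ and hence normal in $G$. In particular $O_{p'}(\gamma_2^*(G))$ is normal in $G$, and passing to $G/O_{p'}(\gamma_2^*(G))$ I may assume that $\gamma_2^*(G)$ is itself a $p$-group, so that $P=\gamma_2^*(G)$. One must check that under this quotient $\gamma_2^*$ still equals the image of $\gamma_2^*(G)$ (true because $\gamma_\infty$ commutes with quotients) and that the standing hypothesis is inherited; the latter is routine since the image of each cyclic $[P,x]$ is again cyclic.

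With $P=\gamma_2^*(G)$ a $p$-group, the base case is immediate. If $P$ were abelian and noncyclic, Lemma \ref{mumugamma} would supply a $p'$-element $x$ with $[P,x]$ noncyclic, contradicting the hypothesis that $[P,x]$ is cyclic for every $p'$-element $x$. Hence an abelian $P$ must be cyclic.

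For the general case I would induct on the nilpotency class of $P$, assuming $P$ nonabelian, and pass to $\bar G=G/Z(P)$. The image $\bar P=P/Z(P)$ is the Sylow $p$-subgroup of $\gamma_2^*(\bar G)$ and has strictly smaller class, while $\bar G$ is again metanilpotent. The one point requiring care is that the hypothesis survives the quotient: given a $p'$-element $\bar x\in\bar G$, I lift it through the normal $p$-subgroup $Z(P)$ to a $p'$-element $x\in G$ — possible by coprimality, as $Z(P)$ is a $p$-group — so that $[\bar P,\bar x]=\overline{[P,x]}$ is a homomorphic image of the cyclic group $[P,x]$, hence cyclic. By the inductive hypothesis $\bar P=P/Z(P)$ is cyclic; but a group whose central quotient is cyclic is abelian, contradicting the nonabelianness of $P$. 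This contradiction finishes the induction.

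The only genuine obstacle is this lifting step in the inductive reduction, namely confirming that the coprimality hypothesis ``$[P,x]$ cyclic'' transfers faithfully to $G/Z(P)$; note that here $x$ ranges over all $p'$-elements (with no commutator restriction), so the lifting is in fact easier than the corresponding step in Lemma \ref{mumba}. Everything else is a direct transcription of that proof.
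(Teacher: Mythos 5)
Your proposal is correct and is essentially the paper's own proof: the paper states that Lemma \ref{mumbagamma} follows word-by-word the proof of Lemma \ref{mumba} (reduce to $G/O_{p'}(\gamma_2^*(G))$, settle the abelian case via Lemma \ref{mumugamma}, then induct on the nilpotency class through $G/Z(P)$), which is precisely your argument. Your additional verifications (that $\gamma_2^*$ passes to quotients, and that $p'$-elements lift through the $p$-group $Z(P)$ by coprimality) are exactly the routine details the paper omits.
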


The next results  are similar to Lemmas \ref{murara} and \ref{blibli}. Their proofs can be obtained   in the same way as those of Lemmas \ref{murara} and \ref{blibli}  with only obvious changes required. 

\begin{lemma}\label{muraragamma} Let $p$ be a prime, $j$ a positive integer and $G$ a  metanilpotent group. Suppose that $G$ possesses $m$ cyclic subgroups whose union contains all $\gamma_j^*$-commutators of $G$, and  that the Sylow $p$-subgroup $P$ of $\gamma_j^*(G)$ is nilpotent of class $c$. Let $x$ be a $p'$-element  in $G$ such that $[P,x]$ is noncyclic. Then the order of $[P,x]$ is $(c,m)$-bounded.
\end{lemma}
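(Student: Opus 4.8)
Lemma \ref{muraragamma} — plan of proof.

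The statement is the $\gamma^*$-analogue of Lemma \ref{murara}, so the plan is to reduce it to Lemma \ref{coctionbounded} exactly as was done in the $\delta^*$-case. First I would observe that conjugation by the $p'$-element $x$ induces an automorphism $\alpha$ of the normal $p$-subgroup $P$, and this automorphism is coprime since $x$ is a $p'$-element and $P$ is a $p$-group. The nilpotency class of $P$ is $c$ by hypothesis. The only thing that needs to be checked before invoking Lemma \ref{coctionbounded} is its covering hypothesis: that $P$ has $m$ cyclic subgroups whose union contains all elements of the form $[y,\underbrace{\alpha,\dots,\alpha}_{j}]$ with $y\in P$.

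The key step is the verification of this covering condition, and here lies the one genuine difference from Lemma \ref{murara}. In the $\delta^*$-setting one used the fact that every element $[y,x,\dots,x]$ (with $j$ copies of $x$) is a $\delta_j^*$-commutator, so that the $m$ cyclic subgroups covering all $\delta_j^*$-commutators automatically cover these elements. In the present situation the corresponding fact is that, for $y\in P$, the element $[y,\underbrace{x,\dots,x}_{j}]$ is a $\gamma_j^*$-commutator: indeed $y$ is a $\gamma_1^*$-commutator (every element is), and since $P$ is a normal $p$-subgroup and $x$ is a $p'$-element, each successive commutator $[y,x],[y,x,x],\dots$ is formed from an element (a power of a $\gamma_{i-1}^*$-commutator) and the $p'$-element $x$ of coprime order, so an easy induction identical to that in Lemma \ref{1113} shows $[y,\underbrace{x,\dots,x}_{i}]$ is a $\gamma_i^*$-commutator for each $i$. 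Taking $i=j$ gives that every element of the required form is a $\gamma_j^*$-commutator, and hence lies in the union of the $m$ given cyclic subgroups.

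With the covering hypothesis confirmed, Lemma \ref{coctionbounded} applies directly: since $[P,\alpha]=[P,x]$ is assumed noncyclic, the order of $[P,x]$ is $(c,m)$-bounded, which is the desired conclusion. I do not anticipate a serious obstacle; the proof is essentially Lemma \ref{murara} verbatim, with the phrase ``$\delta_j^*$-commutator'' replaced by ``$\gamma_j^*$-commutator'' throughout and the elementary coprime-commutator induction of Lemma \ref{1113} replaced by its transparent $\gamma^*$-counterpart (which is the content of the argument used in Lemma \ref{copgen} to show that $[P,x]$ is generated by $\gamma_j^*$-commutators). The metanilpotency of $G$ plays only the background role of guaranteeing that $P$ is normal in $G$ and that the coprime action is available.
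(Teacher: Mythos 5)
Your proposal is correct and is essentially the paper's own argument: the paper proves Lemma \ref{muraragamma} by repeating the proof of Lemma \ref{murara} (conjugation by $x$ gives a coprime automorphism of $P$, the elements $[y,\underbrace{x,\dots,x}_{j}]$ are identified as coprime commutators covered by the $m$ cyclic subgroups, and Lemma \ref{coctionbounded} is invoked), with the $\delta_j^*$-identification via Lemma \ref{1113} replaced by the elementary $\gamma^*$-induction you describe. Your indexing is in fact slightly conservative --- $[y,\underbrace{x,\dots,x}_{i}]$ is even a $\gamma_{i+1}^*$-commutator --- but your weaker claim is true and exactly what the covering hypothesis of Lemma \ref{coctionbounded} requires, so there is no gap.
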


\begin{lemma}\label{blibligamma} Let  $j$ be a positive integer and $G$  a group  such that $\gamma_j^*(G)$ is nilpotent of class $c$. Suppose that $G$ possesses $m$ cyclic subgroups whose union contains all $\gamma_j^*$-commutators of $G$. Then $\gamma_j^*(G)$ contains a subgroup $\Delta$, of $(c,m)$-bounded order, which is normal in $G$ and has the property that $\gamma_{j}^{*}(G)/\Delta$ is cyclic.
\end{lemma}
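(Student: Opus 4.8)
The plan is to follow the proof of Lemma~\ref{blibli} almost verbatim, replacing each $\delta$-ingredient by its $\gamma$-analogue proved above, and to treat the cases $j=1$ and $j\ge2$ separately. First I would record the reduction to the metanilpotent situation needed to invoke Lemmas~\ref{mumbagamma} and~\ref{muraragamma}. If $j=1$ then $\gamma_1^*(G)=G$ is nilpotent by hypothesis, so $G$ is in particular metanilpotent. If $j\ge2$ then $\gamma_j^*(G)=\gamma_\infty(G)$, and the assumption that this subgroup is nilpotent says exactly that $G$ has a normal nilpotent subgroup with nilpotent quotient; hence $G$ is metanilpotent.

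For the base case $j=1$ the group $G=\gamma_1^*(G)$ is itself nilpotent of class $c$ and, by hypothesis, covered by $m$ cyclic subgroups. I would argue exactly as in the $j=0$ case of Lemma~\ref{blibli}: every Sylow $p$-subgroup $P$ of $G$ is covered by at most $m$ cyclic subgroups, so by Lemma~\ref{pri} the noncyclic ones have $m$-bounded order. A short counting argument shows that only an $m$-bounded number of Sylow subgroups can be noncyclic, since covering the product of the rank-two elementary abelian sections of the noncyclic Sylow subgroups forces $m\ge\prod_i(p_i+1)\ge3^{k}$, where $k$ is the number of such primes. Taking $\Delta$ to be the product of the noncyclic Sylow subgroups then yields a characteristic subgroup of $m$-bounded order whose complementary factor, being a direct product of cyclic groups of pairwise coprime orders, is cyclic.

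Now suppose $j\ge2$, so that $\gamma_j^*(G)=\gamma_\infty(G)$ is nilpotent. For each prime $p$ dividing $|\gamma_j^*(G)|$ let $P$ be the Sylow $p$-subgroup of $\gamma_j^*(G)$, which is normal in $G$, and set
\[
\Delta_p=\bigl\langle\,[P,y] : \text{$y$ is a $p'$-element of $G$ and $[P,y]$ is noncyclic}\,\bigr\rangle.
\]
By Lemma~\ref{muraragamma} every such $[P,y]$ has $(c,m)$-bounded order, so $\Delta_p$ is a nilpotent group of class at most $c$ generated by elements of $(c,m)$-bounded order, hence of $(c,m)$-bounded exponent. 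By Lemma~\ref{copgen}(1) each $[P,y]$ is generated by $\gamma_j^*$-commutators, all of which lie in the given $m$ cyclic subgroups; thus $\Delta_p$ needs at most $m$ generators, and a nilpotent group of bounded class, bounded exponent and boundedly many generators has $(c,m)$-bounded order. As this bound is independent of $p$, we get $\Delta_p=1$ for all $p$ exceeding some $(c,m)$-bounded number, so $\Delta=\prod_p\Delta_p$ has $(c,m)$-bounded order. Conjugation by $g\in G$ sends $[P,y]$ to $[P,y^{g}]$ (since $P$ is normal in $G$), permuting the generating subgroups, so each $\Delta_p$, and hence $\Delta$, is normal in $G$.

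Finally I would pass to $G/\Delta$ and assume $\Delta=1$. Then for every prime $p$ and every $p'$-element $x$ the subgroup $[P,x]$ is cyclic, so Lemma~\ref{mumbagamma} forces $P$ to be cyclic. Hence all Sylow subgroups of the nilpotent group $\gamma_j^*(G)$ are cyclic, whence $\gamma_j^*(G)$ itself is cyclic, giving $\gamma_j^*(G)/\Delta$ cyclic as required. I expect no genuine obstacle beyond bookkeeping, since all the real work has been pushed into Lemmas~\ref{muraragamma} and~\ref{mumbagamma} (and through them into Lemmas~\ref{mumugamma} and~\ref{bibi}). The only points needing a moment's care are the metanilpotent reduction when $j\ge2$ and the verification that, after factoring out $\Delta$, the hypothesis of Lemma~\ref{mumbagamma} holds for \emph{every} $p'$-element of the quotient; both are routine once one notes that $\gamma_\infty$ passes to quotients and that $p'$-elements lift to $p'$-elements under coprime action, so that $[\,\overline{P},\overline{x}\,]=\overline{[P,x]}$ is cyclic in each case.
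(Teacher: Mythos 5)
Your proposal is correct and takes essentially the same route as the paper: the paper itself only remarks that Lemma~\ref{blibligamma} follows from the proof of Lemma~\ref{blibli} ``with only obvious changes required,'' and your write-up supplies exactly those changes (the metanilpotent reduction, ranging over all $p'$-elements rather than powers of $\delta_{j-1}^*$-commutators, Lemma~\ref{copgen}(1) in place of \ref{copgen}(3), and Lemmas~\ref{muraragamma}/\ref{mumbagamma} in place of \ref{murara}/\ref{mumba}). Your separate treatment of $j=1$ mirrors the paper's $j=0$ base case of Lemma~\ref{blibli}, and the details you flag (lifting $p'$-elements and passing $[P,x]$ to the quotient $G/\Delta$) are handled correctly.
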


From this we deduce  our Theorem \ref{gammathm}.
\begin{proof}[Proof of Theorem \ref{gammathm}]  Recall that $G$ has $m$ cyclic subgroups whose union contains all $\gamma_j^*$-commutators of $G$. We wish to prove that $\gamma_j^*(G)$ contains a subgroup $\Delta$, of $m$-bounded order, which is normal in $G$ and has the property that $\gamma_{j}^{*}(G)/\Delta$ is cyclic. Let $C_1,\dots,C_m$ be the cyclic subgroups whose union contains all $\gamma_j^*$-commutators of $G$. We assume that each subgroup $C_i$ is generated by $\gamma_j^*$-commutators. Thus, $\gamma_j^*(G)=\langle C_1,\dots,C_m\rangle$ and in particular it follows that $\gamma_j^*(G)$ can be generated by $m$ elements.   Let $N$ be the intersection of all normalizers of cyclic subgroups generated by a $\gamma_j^*$-commutator and $K=\gamma_j^*(G)\cap N$. Arguing as in the proof of Theorem \ref{deltathm} we deduce that the index of $K$ in $\gamma_j^*(G)$ is $m$-bounded. It is clear that $K$ is nilpotent of class at most 2 and $K\leq Z_{\infty}(\gamma_j^*(G))$. By Baer's Theorem $\gamma_\infty(\gamma_j^*(G))$ has $m$-bounded order. Passing to the quotient $G/\gamma_\infty(\gamma_j^*(G))$ we can assume that $\gamma_j^*(G)$ is nilpotent and, with further reductions, that $\gamma_j^*(G)$ is a $p$-group. In  view of Lemma \ref{blibligamma} it is now sufficient to bound the nilpotency class of $\gamma_j^*(G)$. Since $K'$ centralizes $\gamma_j^*(G)$, we can pass to the quotient $G/K'$ and without loss of generality assume that $K$ is abelian. Choose generators $x_1,\dots,x_m$ of the subgroups $C_1,\dots,C_m$ and let $t$ be the index of $K$ in $\gamma_j^*(G)$. Since each subgroup $K\langle x_i\rangle$ is nilpotent of class at most 2 and since ${x_i}^t\in K$, it follows that $K^t$ centralizes $x_i$ for each $i=1,\dots,m$. In other words $K^t\leq Z(\gamma_j^*(G))$. Passing again to the quotient $G/Z(\gamma_j^*(G))$ we can assume that $K^t=1$. Since the index $t$ of $K$ in $\gamma_j^*(G)$ is $m$-bounded and  $\gamma_j^*(G)$ can be generated by $m$ elements, we conclude that the minimal number of generators for $K$ is $m$-bounded. Combining this with the fact that $K^t=1$, we deduce that the order of $K$ and therefore that of $\gamma_j^*(G)$ are $m$-bounded. Of course, this implies that so is the nilpotency class of $\gamma_j^*(G)$. The proof is complete.
\end{proof}


\end{document}